\numberwithin{equation}{section}
\newtheorem{Thm}{Theorem}
\newtheorem{Def}{Definition}
\newtheorem{Lm}{Lemma}
\newtheorem{Prop}[Lm]{Proposition}
\newtheorem{Rk}{Remark}
\newtheorem{Cor}[Lm]{Corollary}
\def\bdef{\begin{Def}}
\def\endef{\end{Def}}
\def\bthm{\begin{Thm}}
\def\ethm{\end{Thm}}
\def\bprop{\begin{Prop}}
\def\enprop{\end{Prop}}
\def\blm{\begin{Lm}}
\def\elm{\end{Lm}}
\def\bcor{\begin{Cor}}
\def\ecor{\end{Cor}}
\def\brm{\begin{Rem}}
\def\erm{\end{Rem}}
\def\bfig{\begin{picture}}
\def\efig{\end{picture}}
\def\be{\begin{eqnarray}}
\def\ee{\end{eqnarray}}
\def\beal{\begin{aligned}}
\def\enal{\end{aligned}}
\def\Z{\mathbb Z}
\def\T{\mathbb T}
\def\R{\mathbb R}
\def\eps{\varepsilon}
\def\al{\alpha}
\def\bt{\beta}
\def\dt{\delta}
\def\gm{\gamma}
\def\Om{\Omega}
\def\pt{\partial}
\title{Arnold Diffusion in a Restricted Planar Four-Body Problem}
\author{Jinxin Xue\\ Department of Mathematics,
 University of Maryland, \\
College Park, MD, 20742}
\begin{document}
\thanks{\emph{Email:} jxue@math.umd.edu}%
\maketitle

\begin{abstract}
In this paper, we construct a certain planar four-body problem which exhibits fast energy growth under certain assumption.  The system considered is a quasi-periodic perturbation of the Restricted Planar Circular three-body Problem (RPC3BP).  Gelfreich-Turaev's and de la Llave's mechanism is employed to obtain the fast energy growth.  The diffusion is created by a heteroclinic cycle formed by two Lyapunov periodic orbits surrounding $L_1$ and $L_2$ Lagrangian points and their heteroclinic intersections. Our model is the first known example in celestial mechanics about the a priori chaotic case of Arnold diffusion \cite{1}.
\end{abstract}
\section{Introduction}\label{section: intro}
In this paper, we construct a model of the restricted planar four-body problem (RP4BP) which exhibit long time instabilities, i.e. motions change substantially under certain assumption.  The model we employ can be considered as a Sun-Jupiter-Planet(small)-Asteroid system.  In this model the mass of the asteroid is assumed to be negligibly small (in fact zero). The mass of the planet, denoted by $\dt$, is strictly positive and $0 < \dt \ll \mu$ where $\mu$ denotes the mass of Jupiter and the mass of the sun is set to be $1-\mu$.  The Sun-Jupiter-Planet (S-J-P) system forms a planar three-body problem (P3BP) which has quasi-periodic motion, and the objective of study is to understand the motions of the massless asteroid in this system.
Gelfreich and Turaev proposed the following mechanism for Arnold
diffusion \cite{2}. (It was pointed out to me that R. de la Llave proposed the same mechanism in his unpublished paper \cite{3} and mentioned it in his ICM 2006 talk. So in the following, we call it the $GTL$ mechanism.) For a nonautonomous Hamiltonian system,
$H(p,q,\eps t),\ q \in \T^n,\ p\in \R^n$, consider the frozen system, $H(p,q,\nu)$.
Suppose for each $\nu$, and each energy surface of the energy interval $[h_-,h_+]$ of the
frozen Hamiltonian $H(p,q,\nu)$, there are hyperbolic periodic orbits $\gm_1$ and $\gm_2$ with stable and unstable manifolds $W^{u,s}(\gm_1)$, $W^{u,s}(\gm_2)$ which make transversal heteroclinic intersections. In \cite{2} it is proven that,
for a sufficiently small $\eps$, there exists $t_1>0$ such
that the Hamiltonian $H(p,q,\eps t)$ has a linearly fast diffusing
orbit $(p,q)(t)$, i.e.
\[
H(p(0),q(0),0)=h_-, \quad H(p(t),q(t),\eps t)=h_+
\]
for some $t\le t_1/\eps$.\\
One important feature of the mechanism of \cite{2} is that their methods do not rely on how the Hamiltonian depends on $\eps t$. The results hold true for periodic, quasiperiodic and other settings. The key to this paper is to apply the \cite{2} mechanism to the RP4BP. In our case, the $\eps t$ dependence is quasi-periodic.

Notice that in restricted planar circular three-body problem (RPC3BP) there are two normally
hyperbolic periodic orbits $\gm_1,\gm_2$ surrounding the $L_1$ and $L_2$ Lagrangian points respectively. We will have the ``heteroclinic cycle" required in \cite{2} if we know that their stable and unstable manifolds have heteroclinic intersections.

To obtain slow time-quasi-periodic perturbation we need to exploit the
planet. We select S-J-P to have quasi-periodic orbit, along which the planet is far from the sun and Jupiter and the distance is of order $\eps^{-2/3}$, so the motions of the asteroid can be described as solutions to the Hamiltonian system with a Hamiltonian $H_{A,rot}$ that is a small and slow quasi-periodic perturbation of the RCP3BP,
 (see equation ~(\ref{eq: main})): \[H_{A,rot}(L_A,\ell_A, G_A, g_A, t)=\mathrm{RPC3BP}+ f(L_A,\ell_A, G_A, g_A,\eps t),\quad (L_A,\ell_A, G_A, g_A)\in T^*\T^2,\]
where the function $f=o(1)$ as $\dt\to 0$ is complicated, whose complete expression will be given in Theorem \ref{Thm: ham}. The variables that we are using here are called Delaunay coordinates (see Section 3 and Appendix \ref{AppendixDelaunay} for more details).

The main result proved in this paper is:
\begin{Thm}
If in RPC3BP, for some fixed $\mu>0$ sufficiently small, there exists an energy interval $(\hat h_-,\ \hat h_+)$ such that in each energy level $h\in (\hat h_-,\ \hat h_+)$, there are two Lyapunov periodic orbits $\gm_1,\gm_2$ whose diameters are sufficiently small, and their stable and unstable manifolds intersect transversally. Then in the Sun-Jupiter-Planet-Asteroid system $H_{A,rot}$ for the same $\mu$ and for  $\eps$ and $\dt>0$ small enough satisfying $\dt=O(\eps^{3})$, there exists a diffusion orbit $(L_A,\ell_A,G_A,g_A)(t)$ with linearly fast energy growth. i.e. There is an energy interval $[h_-,h_+]\subset (\hat h_-,\ \hat h_+)$ independent of $\eps,\delta$, such that the energy $H_{A,rot}$ of the asteroid  has  growth:
\[H_{A,rot}((L_A,\ell_A,G_A,g_A)(0),0)=h_-, \quad H_{A,rot}((L_A,\ell_A,G_A,g_A)(t),\eps t)=h_+\]
for some $t\leq \mathrm{const.}/(\dt\eps^{1/3})$, where $h_\pm$ satisfies $|\hat h_+-h_+|,\ |\hat h_--h_-|=o(1)$ as  $\dt\to 0$.\label{Thm: main}
\end{Thm}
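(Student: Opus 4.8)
\emph{Proof strategy.} The plan is to realize Theorem~\ref{Thm: main} as an instance of the $GTL$ mechanism of \cite{2}: the bulk of the work is to verify, uniformly in the frozen slow phase and in the energy, that the heteroclinic cycle hypothesized for the RPC3BP survives the quasi-periodic perturbation $f$, after which \cite{2} delivers the diffusing orbit and a tracking of its energy--growth rate delivers the time bound.

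First I would invoke Theorem~\ref{Thm: ham} to write
\[
H_{A,rot}(L_A,\ell_A,G_A,g_A,t)=\mathrm{RPC3BP}(L_A,\ell_A,G_A,g_A)+f(L_A,\ell_A,G_A,g_A,\eps t),
\]
where $f(\cdot,\nu)$ is quasi-periodic in the slow time, $\nu\in\T^{k}$ being the vector of slow angles carried by the Sun--Jupiter--Planet motion, and where Theorem~\ref{Thm: ham} provides explicit bounds showing that $f$, together with the derivatives we shall need, is $o(1)$ as $\dt\to 0$, its size being governed by $\dt$ up to fixed powers of $\eps$ coming from the $\eps^{-2/3}$ Sun--planet distance. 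Under $\dt=O(\eps^{3})$ this makes $f$ a $C^{2}$-small perturbation on the compact region of $T^{*}\T^{2}$ occupied by the Lyapunov orbits $\gm_{1},\gm_{2}$ and by the fundamental domains of their invariant manifolds that enter the heteroclinic cycle. Here I would recall the point stressed after \cite{2} in the introduction: the $GTL$ method is indifferent to \emph{how} the Hamiltonian depends on the slow time, so the quasi-periodic parameter $\nu\in\T^{k}$ is handled exactly as the periodic case $\nu\in\T^{1}$.

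Next, introduce the frozen family $H_{\nu}:=\mathrm{RPC3BP}+f(\cdot,\nu)$ and check the hypotheses of \cite{2} for it, uniformly in $\nu\in\T^{k}$ and in $h$ over a slightly shrunk interval $[h_{-},h_{+}]\subset(\hat h_{-},\hat h_{+})$: (i) the Lyapunov periodic orbits of the RPC3BP are normally hyperbolic, so for $\dt,\eps$ small they continue to hyperbolic periodic orbits $\gm_{i}(\nu,h)$ of $H_{\nu}$ on every energy level $h\in[h_{-},h_{+}]$, depending smoothly on $(\nu,h)$ and $C^{1}$-close to the unperturbed ones, by the implicit function theorem applied to the Poincar\'e return map together with persistence of the one-dimensional normally hyperbolic cylinders swept out as $h$ varies; (ii) the transversal heteroclinic intersections assumed for the RPC3BP persist, since transversality of $W^{u}(\gm_{1})\pitchfork W^{s}(\gm_{2})$ and of $W^{u}(\gm_{2})\pitchfork W^{s}(\gm_{1})$ inside the perturbed energy surface is open under $C^{1}$-small perturbations of the vector field and of the surface, and the relevant compact fundamental domains move only by $o(1)$; it is precisely here that the hypothesis of \emph{small} diameters of $\gm_{1},\gm_{2}$ is used, to keep the orbits and the pieces of their invariant manifolds inside the coordinate chart where the expansion of $f$ from Theorem~\ref{Thm: ham} is valid with uniformly bounded derivatives, and to guarantee uniform normal hyperbolicity; (iii) the admissible energy window produced for the family $H_{\nu}$ differs from $(\hat h_{-},\hat h_{+})$ only by $o(1)$ as $\dt\to 0$, which is the asserted closeness of $h_{\pm}$ to $\hat h_{\pm}$.

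With (i)--(iii) in place, apply the theorem of \cite{2} to $H_{A,rot}=H_{\eps t}$ to obtain an orbit $(L_A,\ell_A,G_A,g_A)(t)$ with $H_{A,rot}(\cdot,0)=h_{-}$ and $H_{A,rot}(\cdot,\eps t)=h_{+}$; what remains is the quantitative diffusion time. This is read off from the construction by bookkeeping: along each passage around the heteroclinic cycle the energy changes by an amount comparable to $\eps\,\|\pt_{\nu}f\|$ times the dwell time spent near the periodic orbits, and the heteroclinic branches can be selected so that a definite fraction of this is a \emph{net} gain; summing over the $O\big((h_{+}-h_{-})/(\text{gain per passage})\big)$ passages needed to cross $[h_{-},h_{+}]$, and inserting the explicit $\dt,\eps$-dependence of $\|\pt_{\nu}f\|$ and of the dwell and transition times near the small Lyapunov orbits supplied by Theorem~\ref{Thm: ham} and by step~(i), produces the bound $t\le\mathrm{const}/(\dt\,\eps^{1/3})$; the scaling $\dt=O(\eps^{3})$ is exactly the smallness needed to keep $f$ and the derivatives entering (i)--(ii) small enough relative to $\eps$ for the continuation argument and for \cite{2} to apply. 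I expect the principal obstacle to be step~(ii) carried out \emph{uniformly}: controlling the perturbed invariant manifolds and their transversality simultaneously for all $\nu\in\T^{k}$ and all $h\in[h_{-},h_{+}]$ in the Delaunay chart, which is singular at circular and collisional orbits, and tracking the dependence of every constant on $\eps$ and $\dt$ with enough precision that the final time estimate and the relation $\dt=O(\eps^{3})$ come out as stated.
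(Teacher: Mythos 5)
Your outline captures the first half of the argument — writing $H_{A,rot}$ as RPC3BP plus a small quasi-periodic $f$ via Theorem~\ref{Thm: ham}, persisting the Lyapunov orbits, their cylinders, and their transversal heteroclinics under the $C^1$-small perturbation, and then invoking the result of \cite{2}. This matches the paper's Sections 3--5 in spirit. But you have skipped the genuinely hard and nontrivial hypothesis of the $GTL$ theorem, which is exactly where the paper spends most of its effort: the \emph{non-degeneracy condition}. Theorem~\ref{Thm: GT} does not deliver a diffusing orbit merely because the heteroclinic cycle exists for all $\nu$; it requires that the ODE
\[
\dfrac{dh}{d\nu}=\max\{v_1(h,\nu),\,v_2(h,\nu)\}-\sigma\beta(h,\nu),
\qquad
v_i=\dfrac{1}{T_i}\int_0^{T_i}\dfrac{\partial H}{\partial\nu}\Big|_{\gm_i}\,dt,
\]
admit a solution $h_\sigma(\nu)$ that actually increases. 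In the paper this is reduced (via \eqref{eq: 3} and the Birkhoff ergodic theorem applied to the quasi-periodic $\nu\mapsto f(\cdot,\nu)$) to showing $\bar f_1(\nu)-\bar f_2(\nu)\not\equiv\mathrm{const}$, and the proof of this non-degeneracy is the content of the long Lemma~\ref{Lm: nondeg}, a term-by-term Taylor analysis of \eqref{eq: pert} in which the dominant contribution is identified as $\dt\langle u,q_P\rangle$ with $u\ne 0$ a difference of integrals over $\gm_1$ and $\gm_2$; showing $u\ne 0$ is where the small-$\mu$ and small-diameter hypotheses are actually consumed. Your sentence ``the heteroclinic branches can be selected so that a definite fraction of this is a \emph{net} gain'' asserts precisely the conclusion of that lemma without any mechanism to prove it; in general the averaged perturbations along $\gm_1$ and $\gm_2$ could coincide, in which case no net energy gain is available and \cite{2} produces nothing.

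This gap also undermines your time estimate. The bound $t\le\mathrm{const}/(\dt\eps^{1/3})$ is not obtained from generic ``dwell time $\times$ passages'' bookkeeping: it comes from the sharp estimate $\big|\tfrac{d}{d\nu}\dt\langle u,q_P\rangle\big|=O(\dt\eps^{-2/3})$ (since $|q_P|\sim\eps^{-2/3}$), which gives the energy growth rate in $\nu$, and then from converting $\nu=\eps t$, producing $O\big(1/(\dt\eps^{-2/3}\cdot\eps)\big)=O(\dt^{-1}\eps^{-1/3})$. Without having identified the leading term of $\bar f_1-\bar f_2$ you cannot produce this exponent, and your heuristic accounting would at best yield a bound depending on an unverified ``gain per passage.'' In short: supply the proof of $\bar f_1-\bar f_2\not\equiv\mathrm{const}$ with the explicit $\dt,\eps$ scaling of its $\nu$-derivative; that single lemma is what turns the geometric persistence steps (i)--(iii), which you have correctly laid out, into the stated quantitative diffusion.
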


\begin{Rk}
The assumption in the theorem on the existence of Lyapunov periodic orbits is known to be true $($see Section \ref{section: hyp}$)$, but the part on the transversal intersection of stable and unstable manifolds of the two Lyapunov periodic orbits remains an open problem. In the worst case when energy interval is empty, the theorem is void and we do not have energy growth. However, there are several numerical results in support of the assumption.
\begin{itemize}
\item
In \cite{5}, the authors did rigorous numerics to show the ``topological" intersection of the stable and unstable manifolds of the two Lyapunov periodic orbits for realistic mass ratio $\mu$ and energy level and constructed symbolic dynamics using the periodic orbits and the topological intersections.
\item There is numerical evidences in \cite{12} confirming the assumption and showing that this energy interval is ``wide".
\item Moreover, in the case of $\mu$ sufficiently small, there is also numeric evidence in \cite{18} supporting the assumption for the Hill problem, a limiting case of RPC3BP. In that case, we expect to have an estimate of the size of the energy interval $O(\mu^{1/3}),\ \mu\to 0$. Once $\mu$ is chosen and fixed, this size of interval is independent of $\dt,\eps$.
\end{itemize}
We will talk about these results in more details in Section \ref{subsection: hetero}. Our theorem is stated to be compatible with the third bullet point. In the proof, we also consider the case $\mu$ not small. In that case, we also get energy growth if we have transversal intersections of the stable and unstable manifolds of the Lyapunov orbits and the issue raised in later Remark \ref{RkFinal} can be checked numerically.
\end{Rk}
Our diffusion orbit has the following behavior. For most of the time the asteroid follows one of the Lyapunov orbit and gains energy growth. When the energy ceases to grow, the asteroid jumps to the other Lyapunov orbit along the heteroclinic orbit and gains energy growth again. This procedure is repeated until we lose the heteroclinic cycle structure up to some energy level.

The problem of the Arnold diffusion is a long story concerning the instability of generic Hamiltonian systems.
Here we do not try to mention the seas of literatures about Arnold diffusion, but point out the results relevant to our work. Even though the problem of Arnold diffusion has been studied for half a century, there are scarce concrete examples, esp. in celestial mechanics. As far as the author's knowledge, the only known examples are \cite{14,15,16,17}. Their mechanisms are all that of Arnold's original mechanism, called a priori unstable case.

However, our model has new features. The study of energy growth is a simplified version of Arnold diffusion by Mather, so it is also called the Mather problem \cite{1}. The mechanism of diffusion is called ``a priori chaotic" in \cite{1}, since  the reference system has some conserved quantities, but there are orbits which are hyperbolic and with transverse heteroclinic intersections in the manifolds corresponding to the conserved quantities. The systems are not close to integrable, so the Nekhoroshev upper bounds for the time of diffusion does not apply. Our model is the first known model about the a priori chaotic case in celestial mechanics.

Moreover, in fluid mechanics there is a phenomena of secondary flow discovered by Prandtl, which is
in general produced when the centripetal force does not balance the pressure. As an example, consider water circulating in a bowl or cup, the primary flow is circular and might be expected to push heavy particles outward to the perimeter. However, heavy particles such as tea leaves in fact congregate in the center as a result of the secondary flow.  The secondary flow is related to the double exponential growth of vorticity in some models (see \cite{19} for a recent result and references therein). In RPC3BP, the two Lyapunov periodic orbits can be seen as an analogue of the secondary flow since they are also created from the competition of the centripetal force and the force coming from the two primaries. So we expect the energy growth studied in our model may be related to some unstable behaviors (e.g. double exponential growth, or blowup) in fluid mechanics and provide new insights if possible.

Since the RPC3BP is an autonomous system, we do not expect any energy growth in it. Even though we impose assumption on the existence of transversal intersection of stable and unstable manifolds of the Lyapunov periodic orbits, there are still many things to do in order to show our restricted four-body problem has energy growth. The paper is organized as follows. In Section \ref{section: intro2GT}, we give a brief introduction to the GTL mechanism following Gelfreich-Turaev \cite{2}.

In Section \ref{section: per}, we give the construction of the configuration of the four-body problem.  We first find a quasi-periodic orbits for full S-J-P three-body problem. Then we put a massless asteroid into the system and write down the Hamiltonian governing the motion of the asteroid as a quasi-periodic perturbation of the RPC3BP (S-J-A).

In Section \ref{section: SJA}, the RPC3BP(S-J-A) is studied. There are two normally hyperbolic periodic orbits $\gm_1(h),\gm_2(h)$ around the $L_1$ and $L_2$ Lagrangian points respectively, on each energy level $h$ for an energy interval $h\in [h_{-},h_{+}]$ (\cite{4}). We also mention the known numerical results supporting the transversal heteroclinic intersections in \cite{5} and \cite{18,12}.

In Section \ref{section: hyp}, the heteroclinic cycle of the RPC3BP (S-J-A) is transplanted to the RP4BP. This is done using the hyperbolic theory.

In the last section \ref{section: nondegenerate}, GTL mechanism is applied to the RP4BP. We check the nondegeneracy condition required in \cite{2} in this section.

Finally, we have two appendices to introduce the Delaunay and polar coordinates of two-body problem.
\section{A brief introduction to the GTL mechanism following  Gelfreich and Turaev \cite{2}}\label{section: intro2GT}
In this section, we give a brief introduction to the GTL mechanism following  Gelfreich and Turaev \cite{2}. Consider a Hamiltonian system $H = H(p, q, \eps t),\ (p,q)\in \R^{2n}$ with $\eps$ small. It is routine to consider the frozen system in adiabatic invariant theory
$H = H(p, q, \nu)$, where $\nu=\eps t$ is treated as a parameter.
It is required that the frozen system has a chaotic behavior, namely there exists uniformly-hyperbolic, compact, transitive,
invariant set $\Lambda_{h\nu}$ in every energy interval $H=h\in [h_-,h_+]$ for all $\nu$. In every given
energy level, the set $\Lambda_{h\nu}$ is in the closure of a set of hyperbolic periodic orbits
each of which has an orbit of a transverse heteroclinic connection to any of the
others. This means that orbits of the frozen system may stay close to any of the periodic orbits
for an arbitrary number of periods, then come close to another periodic orbit
and stay there, and so on. Now we take two periodic families $\gm_1$ and $\gm_2$ of the frozen system. It is shown that under
some natural conditions one can arrange jumps between $\gm_1$ and $\gm_2$ in such a way that the energy keeps growing.
It is proved that
\begin{Thm}[Theorem 2 and 3 of \cite{2}]
Consider \[v_i(h,\nu) = \dfrac{1}{T_i}
\int^{T_i}_0 \dfrac{\partial H(p,q,\nu)}{\partial\nu}\Big|_{(p,q)=\gm_i(t;h,\nu)}dt, \quad i=1,2,\]
where $T_i$ is the period of the periodic orbit $\gm_i$. Assume that the differential equation
\[\dfrac{dh}{d\nu}= \max\{v_1(h, \nu), v_2(h, \nu)\}-\sigma\beta(h,\nu) \]
has a solution $h_\sigma(\nu)$ for $\sigma$ sufficiently small to suppress $\beta$ $($where $\beta$ is defined in equation $(46)$ of \cite{2}$)$.  Assume the uniformity assumptions $[UA1]$ and $[UA2]$ hold true. Then for all
sufficiently small $\eps$ the Hamiltonian system $H(p,q,\eps t)$ has a solution $(p(t),q(t))$ such that \[H(p(0),q(0), 0) = h_\sigma(0),\quad H(p(t),q(t),\eps t) = h_\sigma(\eps t).\]\label{Thm: GT}
\end{Thm}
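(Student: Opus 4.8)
\emph{Sketch of the argument.}

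The plan is to pass to the autonomous extended system obtained by adjoining $\nu$ and its conjugate momentum, with $\dot\nu=\eps$, so that $H(p,q,\nu)$ becomes a slowly varying ``partial energy,'' and then to build the diffusing orbit as a shadowing orbit of a pseudo-orbit that alternately tracks the two families $\gm_1,\gm_2$, at each stage committing to whichever currently has the larger drift rate $v_i$. First I would record the geometry of the frozen system: for fixed $\nu$ the set $\Lb_i(\nu):=\bigcup_{h\in[h_-,h_+]}\gm_i(\,\cdot\,;h,\nu)$ is a two-dimensional normally hyperbolic invariant cylinder, foliated by the periodic orbits and coordinatized by $(\text{phase},h)$; letting $\nu$ vary, these assemble into a normally hyperbolic invariant manifold $\widetilde\Lb_i$ of the extended frozen flow. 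Because the genuine (non-frozen) vector field differs from the frozen one only through the $O(\eps)$ drift of $\nu$, for $\eps$ small relative to the hyperbolicity rates the manifolds $\widetilde\Lb_i$, their local stable and unstable manifolds, and --- crucially --- the transverse heteroclinic intersections $W^u(\widetilde\Lb_1)\pitchfork W^s(\widetilde\Lb_2)$ and $W^u(\widetilde\Lb_2)\pitchfork W^s(\widetilde\Lb_1)$ all persist; making these estimates uniform in $(h,\nu)$ is exactly the role of $[UA1]$ and $[UA2]$.

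The inner dynamics on $\widetilde\Lb_i$ is an averaging computation. Along any trajectory one has $\dfrac{dH}{dt}=\eps\,\partial_\nu H(p,q,\eps t)$ identically; integrating over one revolution of a trajectory that stays $O(\eps)$-close to $\gm_i$ and averaging $\partial_\nu H$ over the period $T_i$ produces the mean value $v_i(h,\nu)$, with an $O(\eps)$ correction from the gap between the trajectory and $\gm_i$. Hence, while shadowing $\gm_i$, the slow energy obeys $\dfrac{dh}{d\nu}=v_i(h,\nu)+O(\eps)$ uniformly. This is the origin of the $v_1,v_2$ in the model equation.

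For the outer dynamics I would invoke the inclination lemma for normally hyperbolic invariant manifolds together with the transversality of the heteroclinic connections to obtain a shadowing statement: any itinerary prescribing ``stay near $\widetilde\Lb_{i_1}$ for $N_1$ revolutions, cross to $\widetilde\Lb_{i_2}$, stay for $N_2$ revolutions, $\dots$'' with the $N_k$ bounded below by a fixed constant is realized by a genuine orbit, each crossing taking time $O(\vert\ln\eps\vert)$, during which $\nu$ advances by only $O(\eps\vert\ln\eps\vert)$ and $H$ changes by a comparably small amount. Now take the solution $h_\sigma(\nu)$ of $\dfrac{dh}{d\nu}=\max\{v_1,v_2\}-\sigma\beta$, partition the relevant $\nu$-interval into short pieces, on each of them choose $i$ attaining the maximum, and assemble the corresponding pseudo-orbit. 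The defect between $\max\{v_1,v_2\}$ and the drift actually realized --- contributed by the $O(\eps)$ averaging error, by the energy expended during the jumps, and by the fact that one must follow a periodic orbit for several revolutions rather than switching instantaneously --- is bounded by $\sigma\beta(h,\nu)$ as soon as $\eps$ is small compared with $\sigma$; so the shadowing orbit stays within an $O(\eps)$ energy-tube around $h_\sigma(\nu)$, and in particular its partial energy moves from $h_\sigma(0)$ to $h_\sigma(\eps t)$.

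The step I expect to be the main obstacle is the coupling between the itinerary and the energy: which family one should follow on a given piece depends on the current energy, which in turn depends on the whole past itinerary. I would close this loop with a fixed-point argument --- the composition ``energy profile $\mapsto$ itinerary it dictates $\mapsto$ energy profile of the resulting shadowing orbit'' is a contraction on the set of profiles lying inside the $\sigma\beta$-tube around $h_\sigma$, precisely because that slack dominates every error term --- together with an adaptive choice of the commitment lengths $N_k$ near the degenerate locus $\{v_1=v_2\}$, where one must be able to switch families at essentially no cost. Subordinate to this are the uniform-in-$(h,\nu)$ hyperbolicity bounds (persistence and sizes of $\widetilde\Lb_i$ and their local manifolds, the angle of the heteroclinic splitting, the inclination-lemma constants), which is what $[UA1]$ and $[UA2]$ are there to guarantee, and the requirement $\max\{v_1,v_2\}-\sigma\beta>0$ needed for $h_\sigma$ to traverse $[h_-,h_+]$ in the first place.
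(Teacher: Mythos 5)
This theorem is not proved in the paper: it is imported verbatim from Gelfreich--Turaev \cite{2} (and credited also to \cite{3}), and the section surrounding it is expressly labelled ``a brief introduction to the GTL mechanism'' --- the paper's own contribution begins only when it \emph{verifies} the hypotheses of this theorem for the four-body model (checking $[UA1],[UA2]$ in Section \ref{subsection: uniformity} and the nondegeneracy $\bar f_1\neq\bar f_2+\mathrm{const.}$ in Lemma \ref{Lm: nondeg}). So there is no internal proof against which to compare your sketch, and in this paper the correct ``proof'' of Theorem \ref{Thm: GT} is simply the citation.

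That said, your reconstruction is a fair high-level account of the argument in \cite{2}: the identity $\tfrac{d}{dt}H(p,q,\eps t)=\eps\,\partial_\nu H$, the averaging producing $v_i$, persistence of the hyperbolic structure for small $\eps$ with uniformity supplied by $[UA1],[UA2]$, shadowing of an adaptively chosen itinerary, and the role of $\sigma\beta$ as slack absorbing the averaging error, the jump cost, and the switching near $\{v_1=v_2\}$. Two cautions. First, \cite{2} phrases the hyperbolic geometry in terms of a uniformly hyperbolic compact transitive set $\Lambda_{h\nu}$ with symbolic dynamics, rather than through the normally-hyperbolic-cylinder / inclination-lemma language you use; the two pictures are compatible here, but the quantitative constants entering $\beta$ (equation (46) of \cite{2}) come from the hyperbolic-set formulation, so one cannot simply assert your $O(\eps)$ and $O(|\ln\eps|)$ bookkeeping reproduces them without checking. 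Second, your closing step is stated as a Banach fixed point (``the composition \dots is a contraction''); you give no estimate supporting the contraction claim, and \cite{2} instead closes the loop by an inductive construction of the orbit together with a differential-inequality comparison against $h_\sigma$. Your instinct that the coupling between itinerary and energy is the crux is right, but the contraction claim is the one genuinely unsubstantiated step in the sketch and would need to be replaced by (or derived from) the comparison argument actually used in \cite{2}.
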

We shall show $h_\sigma$ grows linearly. Note that the uniformity assumptions $[UA1],[UA2]$ are automatically fulfilled for any compact set of $h$ and $\nu$, which is exactly what we consider. So we do not cite the lengthy formulation of $[UA1],[UA2]$.\\

\section{The configuration of the four-body problem}\label{section: per}
In this section, we first establish the quasi-periodic motion of the Sun-Jupiter-Planet system, then write down the Hamiltonian governing the motion of the asteroid.

Before the proof, let us introduce some notations in the following definition. Different coordinates will get involved for the convenience of proofs, such as the Cartesian coordinates, polar coordinates and Delaunay coordinates. Please go to Appendix~\ref{AppendixDelaunay}, \ref{AppendixPolar} for the derivations and physical meanings of them.
\begin{Def}
\begin{enumerate}
\item In the Cartesian coordinates, we use $(x,\dot{x}, y, \dot{y})\ ($or $(q, p))$, where $(x,y)\ ($or $q)$ is the position and $(\dot{x},\dot{y})\ ($or $p)$ is the velocity.
\item In Delaunay coordinates, we use the variables\textit{ $(L,\ell,G,g)$}, where $L^2$ is the semimajor of the Keplerian ellipse, $\ell$ the mean anomaly, $G$ the angular momentum and $g$ the argument of the perihelion.
\end{enumerate}
\end{Def}
\begin{Def}
\begin{enumerate}
\item We use the subscripts ``S,J,P,A" to indicate the corresponding quantity of a certain body. For example, $G_P$, the angular momentum of the planet. $r_P$, the relative distance from the planet to the origin.
\item The notations $r_{PJ},r_{PS},r_{AJ},r_{AS},r_{AP}$ denote the mutual distances between the planet and the Jupiter, the planet and the sun, the asteroid and the Jupiter, the asteroid and the sun, the asteroid and the planet, respectively.
\item The notation $r_1$ $($respectively $r_2)$ denotes the distance a point at $(x,y)$ to the point $(-\mu,0)$ $($respectively $(1-\mu,0))$ on the $x$-axis.
\end{enumerate}\label{def: 2}
\end{Def}
\subsection{Selection of quasi-periodic orbits of the P3BP formed by Sun-Jupiter-Planet}\label{subsection: SJPper}
We first show the existence of periodic orbits in the S-J-P system when $\dt =0, \mu\neq 0$, and Sun-Jupiter has circular orbit, i.e. the RPC3BP. Next we continue the periodic orbits to the case of $\dt>0$. We want our periodic orbit to have long period of order $\eps^{-1}$. 

It is proven by Arenstorf and Barrar \cite{9} that the periodic orbits that are symmetric along the $x$-axis are locally isolated on the energy level and therefore can be continued to the RPC3BP for $\mu>0$. The item $(a)$ the next lemma is the result in \cite{9} and is enough for our purpose to prove Theorem \ref{Thm: main} for small $\mu$. Sometimes, people perform numerical study of RPC3BP for realistic value mass $\mu\simeq 10^{-3}$, in which case we need item $(b)$. The idea is that if the orbit of the planet is faraway from the two primaries we can treat the gravitational force as coming from the mass center of the primaries so that we get a perturbed Kepler motion. A similar result for large $\mu$ and for nearly circular orbit of the planet is contained in Chapter 9 of \cite{8}. We modify the proof of the \cite{9} to get item $(b)$. Part (c) will be used to check a nondegeneracy condition in Lemma \ref{Lm: continuition}.
\begin{Lm}
Suppose when $\mu=0$, $y_P(0)=0$ and $\dot x_P(0)=0$, i.e. at time $t=0$, the planet crosses the $x$-axis perpendicularly, and for a later time $T/2$, where $T/2\pi\in \Z$, the planet again crosses the $x$-axis perpendicularly, i.e. $y_P(T/2)=0,\ \dot x_P(T/2)=0$. Then

(a)  except at most finitely many eccentricity  $e_P$, there exists a periodic orbit of the RPC3BP(S-J-P) for each $e_P$ and for $\mu>0$ sufficiently small.

(b) If $T$ is large enough and the eccentricity satisfies $0<e_P<1/2$ when $\mu=0$, then for any $0<\mu\leq 1/2$, there exists periodic orbits of period slightly different from $T$.

(c) If $T$ is large enough and the eccentricity satisfies $0<e_P<1/2$ when $\mu=0$, then  except at most finitely many eccentricity  $e_P$, we have that the period $T_\mu$ of the continued periodic orbit in the RPC3BP with $\mu>0$ differs from the value $T$ and
\[0< |T_\mu-T|\ll 2\pi.\]
\label{Lm: barrar}
\end{Lm}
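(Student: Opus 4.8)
The plan is to obtain all three statements by analytic continuation from the explicit Keplerian solution at $\mu=0$, using the time-reversal symmetry of the rotating-frame equations to halve the dimension of the continuation problem. When $\mu=0$ the massless planet moves on a pure Kepler ellipse about the unit mass at the origin; Kepler's third law turns the hypothesis $T/2\pi\in\Z$ into the requirement that the semimajor axis be $a_P=(T/2\pi)^{2/3}$, and the two perpendicular $x$-axis crossings at $t=0$ and $t=T/2$ force the major axis to lie along the $x$-axis, with the planet at perihelion at $t=0$ and at aphelion at $t=T/2$. Since the rotating frame has turned by the integer multiple $T/2$ of $\pi$, both apsidal passages remain perpendicular to the $x$-axis in rotating coordinates, so this ellipse is a symmetric $T$-periodic orbit of the $\mu=0$ system; call it $\gm^{\,0}_P$.

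For part (a) let $S:(x,y,\dot x,\dot y)\mapsto(x,-y,-\dot x,\dot y)$ be the reversor of the rotating-frame flow, with fixed set $\Sigma=\{y=0,\ \dot x=0\}$; an orbit that meets $\Sigma$ at two distinct instants $0$ and $T/2$ is automatically $S$-symmetric and $T$-periodic. One parametrizes $\Sigma$ near the perihelion point of $\gm^{\,0}_P$, flows the field forward to the crossing of $\{y=0\}$ occurring near time $T/2$, and records the value of $\dot x$ there, getting a map $F(\,\cdot\,;\mu)$; then $F(\,\cdot\,;0)=0$ along $\gm^{\,0}_P$, and the periodic orbits we want for $\mu>0$ are the zeros of $F(\,\cdot\,;\mu)$. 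The implicit function theorem produces them provided the derivative of $F(\,\cdot\,;0)$ along $\Sigma$ (equivalently, the relevant entry of the symmetry-reduced monodromy of $\gm^{\,0}_P$) does not vanish; this derivative is an explicit function of $e_P$ built from the variational equation along the Kepler ellipse, and it vanishes for only finitely many eccentricities, which is the exceptional set. This is exactly the theorem of Arenstorf and Barrar \cite{9}, which we invoke for (a).

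Parts (b) and (c) concern $\mu$ of order one, so $\mu$ itself cannot be the perturbation parameter; instead one uses that $T$ large forces $a_P=(T/2\pi)^{2/3}$ large, while $0<e_P<1/2$ keeps the perihelion distance $a_P(1-e_P)>a_P/2$ large, so the planet stays far from both primaries throughout the orbit. Expanding the potential of the two primaries about their common barycenter (the origin), the planet feels a unit point mass at the origin plus a correction of relative size $O(a_P^{-2})$, which makes the rotating-frame vector field an $O(a_P^{-2})$-perturbation of the integrable Kepler field, uniformly in $\mu\in(0,1/2]$ — the dipole term dropping out precisely because the origin is the barycenter. Running the same symmetry-reduced continuation as in (a), but now with $a_P^{-1}$ (equivalently $T^{-1}$) as the small parameter, yields for $T$ large an $S$-symmetric periodic orbit close to $\gm^{\,0}_P$; because the correction generically displaces the apsidal return time, its period $T_\mu$ is close to, but in general not equal to, $T$, which is (b).

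For (c) one has to establish that $T_\mu-T$ is genuinely nonzero and of controlled sign. The crossing time near $T/2$ equals $T/2$ plus a correction that, at first order, is governed by the secular precession the quadrupole term imposes on the Keplerian ellipse over time $T/2$; this precession rate is an explicit integral over the unperturbed ellipse, monotone — or at worst with finitely many zeros — in $e_P$, so $T_\mu-T$ is $O(a_P^{-2})$-small, nonzero, and of definite sign for all but finitely many $e_P$. The main obstacle is precisely this last point for (b) and (c): checking that the continuation derivative inherited from (a) stays bounded away from zero \emph{uniformly} in $\mu\in(0,1/2]$, not merely for $\mu$ small, and computing the leading term of $T_\mu-T$ with enough accuracy to conclude that it does not vanish — this non-vanishing being the nondegeneracy input used later in Lemma \ref{Lm: continuition}.
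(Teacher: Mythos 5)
Your plan matches the paper's proof structurally: parts (a) and (b) are both proved by continuing a symmetric ($x$-axis perpendicular-crossing) periodic orbit from the decoupled double-Kepler limit via the implicit function theorem, with $\mu$ as the small parameter in (a) and the size of the planetary orbit as the small parameter in (b). The paper sets this up in Delaunay variables, encoding the two perpendicular crossings as $g(0)=-\pi,\ \ell(0)=\pi,\ g(T/2)=-(1+m)\pi,\ \ell(T/2)=(1+k)\pi$, which is equivalent to your reversor $S$ and section $\Sigma=\{y=0,\dot x=0\}$. For (b) the paper rescales $T=\eps^{-1}\hat T,\ L=\eps^{-1/3}\hat L$ so the continuation system becomes $\hat T+O(\mu\eps^2)=2k\pi,\ \hat L+O(\mu\eps^{4/3})=1$, whose Jacobian in $(\hat T,\hat L)$ at $\eps=0$ is the identity; the uniformity-in-$\mu$ of the continuation derivative that you list as a remaining worry is therefore automatic in these coordinates, not an open obstacle.

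The genuine gap is in part (c), where you stop precisely at the step that constitutes the proof. You assert that the quadrupole-induced apsidal precession ``is an explicit integral over the unperturbed ellipse, monotone --- or at worst with finitely many zeros --- in $e_P$,'' but this is exactly what must be established, and nothing you wrote implies it. The paper actually carries out the computation: it expands $\Delta H$ into the angle-free piece $-\frac{\mu(1-\mu)}{2r_P^3}$ and the angle-dependent piece $\frac{3\mu(1-\mu)}{2r_P^5}\langle(1,0),q\rangle^2$; differentiating the first in $G$ and integrating over one Kepler period using $\frac{\partial|q|^2}{\partial G}=2L^4(e-\cos u)\frac{\partial e}{\partial G}$ gives $-\frac{3\pi}{2}\frac{G}{L^5}(1+O(e))$; the second piece is handled by averaging the fast $g$-rotation via Riemann--Lebesgue, which reduces it to a constant multiple of the first piece with the opposite sign, and one must then check explicitly that the two contributions \emph{do not cancel}; analyticity in $e_P$ then gives at most finitely many vanishing eccentricities; and finally the error from substituting the unperturbed ellipse for the true orbit must be bounded, here by $O(\mu\eps^{7/3})\ll G/L^5$. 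Without that bookkeeping one cannot conclude $0<|T_\mu-T|\ll 2\pi$, and that strict inequality is precisely the nondegeneracy input needed later in Lemma~\ref{Lm: continuition} to invoke the Lyapunov center theorem, so the gap is not cosmetic.
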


The next lemma enables us to continue the periodic orbits found in Lemma \ref{Lm: barrar} to the S-J-P system with $\dt>0.$
\begin{Lm}\label{Lm: continuition}
 $($Theorem 9.6.1 in \cite{8}$)$\\
Any elementary periodic solution of the planar restricted three-body problem whose period $T$ is not an integer multiple of $2\pi$ can be continued into the full three-body problem with one small mass. The continued orbit is quasi-periodic in the eight dimensional phase space in Jacobi coordinates $($see \eqref{EqJacobi} later$)$ where angular momentum conservation is not reduced.
\end{Lm}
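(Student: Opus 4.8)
The plan is to prove the lemma by Poincaré's continuation method — an implicit function theorem argument for a Poincaré return map — carried out after reducing the rotational symmetry of the planar three-body problem; this is essentially the argument of \cite{8}, and I sketch it here.

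First I would put the Sun--Jupiter--Planet problem in Jacobi coordinates, taking the Sun--Jupiter relative vector as one pair of variables and the vector from the Planet to the Sun--Jupiter barycentre as the other, so that (after a suitable symplectic rescaling of the Planet's variables) the Hamiltonian splits as
\[
H = H_{SJ} + H_{P} + \dt\, H_{1} + O(\dt^{2}),
\]
where $H_{SJ}$ governs the Sun--Jupiter Kepler motion, $H_{P}$ is the RPC3BP Hamiltonian for the Planet in the field of the primaries, and the coupling terms carry the factor $\dt$ because the back-reaction of the Planet on the primaries is $O(\dt)$. At $\dt=0$ the two subsystems decouple: Sun--Jupiter run on the circular orbit of period $2\pi$, and the Planet obeys exactly the RPC3BP. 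The given \emph{elementary} periodic solution $\gm_{P}$ of the RPC3BP of period $T$, combined with the circular Sun--Jupiter motion, produces — in the phase space with the rotational $S^{1}$ symmetry (the angular momentum) quotiented out — a periodic orbit $\Gm_{0}$ of period $T$ of the $\dt=0$ system.

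Next I would continue $\Gm_{0}$ to $\dt>0$. Fix the total energy level and a section $\Sigma$ transverse to $\Gm_{0}$ in the reduced phase space, and let $P_{\dt}\colon\Sigma\to\Sigma$ be the return map; a fixed point of $P_{\dt}$ near $\Gm_{0}\cap\Sigma$ is the orbit we want, and the implicit function theorem supplies one as soon as $DP_{0}$ does not have $1$ as an eigenvalue. Since at $\dt=0$ the flow is a product, $DP_{0}$ is block-diagonal: one block is the linearized return map of the RPC3BP along $\gm_{P}$ transverse to the flow on the energy surface, the other is the linearized time-$T$ map of the rotationally-reduced Sun--Jupiter two-body problem along its circular orbit. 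The first block has no multiplier $1$ precisely because $\gm_{P}$ is elementary: for an autonomous two-degree-of-freedom system this means that $+1$ occurs with the minimal multiplicity $2$ forced by the energy integral and the flow direction, and both of those directions are removed on passing to $\Sigma$ inside the energy level. Here is where the hypothesis $T\notin 2\pi\Z$ is used: the second block is the monodromy, over time $T$, of a circular Kepler orbit in the reduced two-body problem; in the variables $(r,p_{r})$ at fixed angular momentum the circular orbit is the minimum of the effective potential, and its monodromy is the time-$T$ flow of the corresponding harmonic oscillator. Invoking the defining peculiarity of the Kepler problem that the radial (epicyclic) frequency equals the orbital frequency — the very fact that makes Keplerian ellipses close — this block is, in the normalization in which the Sun--Jupiter angular rate is $1$, conjugate to the planar rotation through the angle $T$, which has $1$ as an eigenvalue if and only if $T\in 2\pi\Z$. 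Thus $T\notin 2\pi\Z$ frees the second block, hence all of $DP_{0}$, of the multiplier $1$, and the implicit function theorem yields a smooth family $x_{\dt}$ of fixed points of $P_{\dt}$, i.e. a periodic orbit $\Gm_{\dt}$ of the rotationally-reduced full three-body problem of period $T_{\dt}$ close to $T$.

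Finally I would reconstruct the dynamics on the unreduced phase space, which for three planar bodies with the centre of mass removed is eight-dimensional and on which the angular momentum is not reduced. Lifting $\Gm_{\dt}$ through the $S^{1}$-bundle given by the rotational symmetry, the motion is the superposition of the periodic motion along $\Gm_{\dt}$, of frequency $2\pi/T_{\dt}\approx 2\pi/T$, and the reconstruction phase — a rigid rotation at the average rate $\Om_{\dt}$ accumulated over one period — so it lies on an invariant two-torus and is quasi-periodic, which is the assertion of the lemma. I expect the delicate part to be the bookkeeping in the last two paragraphs: identifying correctly which directions survive the symplectic reduction and the restriction to the energy level, verifying that the surviving Sun--Jupiter block is genuinely the epicyclic rotation through $T$ and not through a shifted angle, and confirming that ``elementary'' is literally the nondegeneracy that the RPC3BP block requires once the energy and flow directions have been quotiented out. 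Granting these, the rest is the standard Poincaré continuation and reconstruction, as carried out in \cite{8}.
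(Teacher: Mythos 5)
Your proposal follows essentially the same route as the paper: pass to Jacobi coordinates so the Hamiltonian decouples at $\delta=0$ into the RPC3BP for the planet and a two-body problem for Sun--Jupiter, linearize the latter about the circular relative equilibrium to expose a harmonic oscillator of frequency $1$ (your epicyclic-frequency observation reproduces the paper's computation that the radial oscillation has period $2\pi$), and use the non-resonance $T\notin 2\pi\Z$ to continue the product periodic orbit and then reconstruct the rotational angle $\theta$ to obtain a quasi-periodic orbit on an invariant two-torus in the unreduced $8$-dimensional space. The paper reaches the decoupled form via the explicit rescaling $p_2=\delta v_2$, $R=\sqrt{\delta}\,\tilde R$, $r=1+\sqrt{\delta}\,\tilde r$ (so the remainder is $O(\sqrt{\delta})$, not $O(\delta)$ as in your splitting) and invokes the ``Lyapunov center theorem'' for the continuation step, but this is the same content as your Poincar\'e-section implicit function theorem argument, with the Sun--Jupiter monodromy block supplying exactly the non-degeneracy that $T\notin2\pi\Z$ guarantees.
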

Here ``elementary" means that the periodic orbit is isolated on the energy surface. For the sake of completeness and notational convenience, we will sketch the proof.

Now we show the existence of quasi-periodic orbits for the P3BP(S-J-P) for sufficiently small $\dt,\eps$.
We first apply Lemma \ref{Lm: barrar} to find a periodic orbit in the RPC3BP (S-J-P) with $\dt=0,\mu>0$. Our periodic orbit is isolated on its energy level since it is obtained from symmetric orbits along the $x$-axis in the case of $\mu=\dt=0$, which are isolated. Item (c) of Lemma \ref{Lm: barrar} shows that $T_\mu/(2\pi)$ is not an integer if $T/(2\pi)$ is. Next, we apply Lemma \ref{Lm: continuition} to get that the periodic orbit that we obtained using Lemma \ref{Lm: barrar} can be continued to the full three-body problem for sufficiently small $\dt$ and we get a quasi-periodic orbit. The resulting quasiperiodic orbit has the following two properties:
\begin{itemize}
\item  Orbits of the sun and Jupiter are nearly circular.
\item  Orbit of the planet is nearly elliptic with eccentricity $0<e<\dfrac{1}{2}$ and large semimajor $O(\eps^{-2/3})$.
\end{itemize}We will quantify the two properties later in Lemma \ref{LmEst}.

Next we give the proof of Lemma \ref{Lm: barrar}.
\begin{proof}[proof of Lemma \ref{Lm: barrar}]
Part $(a)$ is done in \cite{9}. We recall the argument here and modify it slightly to show part $(b)$.

The idea is as follows. Suppose we have $y(0)=0,\ \dot x(0)=0$, i.e. initially the orbit crosses $x$-axis perpendicularly. If we could show that $y(T/2)=0,\ \dot x(T/2)=0$ for positive $\mu$, then we get a periodic orbit of period $T$.

The Hamiltonian for RPC3BP in Delaunay coordinates $(L_P,\ell_P,G_P,g_P)$ and rotating coordinates is given by (c.f. \cite{7}, \cite{8}):
 \begin{equation}H_P(L_P,\ell_P,G_P,g_P)=-\dfrac{1}{2L^{2}_P}-G_P+\Delta H, \  \mathrm{where}\  \Delta H=\dfrac{1}{r_P}-\dfrac{\mu}{r_{PJ}}-\dfrac{1-\mu}{r_{PS}}.\label{eq: rpc3bp}\end{equation}

Here we write the perturbation using the polar coordinates for simplicity. It should be converted into the Delaunay variables. Rotating coordinates are the noninertia coordinates we choose to fix the sun and Jupiter on the $x$-axis. In the following proof, we suppress the subscript $P$ for Delaunay variables but maintaining it for $r_P,r_{PS},r_{PJ}$.

The method is to consider the double Kepler problem (2BP (S-P) + 2BP (S-J)) by setting $\mu=0$, then the Hamiltonian in rotating coordinates becomes \begin{equation}H(L,\ell,G,g)=-\dfrac{1}{2L^{2}}-G,\label{eq: 2kepler}\end{equation}
and the Hamiltonian equations are
\[\dot{ L}=\dot{ G}=0,\quad \dot{ \ell}=\dfrac{1}{ L^3},\quad \dot{ g}=-1.\]

Consider the resonance relation $\dot{ g}=-(m/k)\dot{ \ell}$, where $m,k\in\Z$  are relative primes. We choose the number $k$ small but $m$ large and define $\eps=k/m$.
This gives us $L=\eps^{-1/3}$ and the semimajor $a=L^2=\eps^{-2/3}.$ For eccentricity $e=\sqrt{1-(b/a)^2}\leq 1/2$ where $b$ is the semi-minor, we get \[r_P\geq a-\sqrt{a^2-b^2}=a(1-e)\geq \dfrac{1}{2}\eps^{-2/3}.\]
Moreover, since we have $e=\sqrt{1-(G/L)^2}$ in the Appendix \ref{AppendixDelaunay}, the assumption $e\leq 1/2$ also implies $G\geq \dfrac{\sqrt{3}}{2}L=\dfrac{\sqrt{3}}{2}\eps^{-1/3}.$

The period for the periodic orbit is $T_0=2\pi m=2\pi k/\eps$. Our assumption on $y(0),\ \dot x(0), \ y(T/2),\ \dot x(T/2)$ can be reformulated in terms of Delaunay coordinates:\\
 {\it initially, we have
$g(0)=-\pi,\ell(0)=\pi,\ $
and at the half period, we have
\begin{equation}\label{eq: barrar}g(T/2)=-(1+m)\pi,\quad \ell(T/2)=(1+k)\pi.\end{equation}}
For $\mu=0$, it follows from the Hamiltonian equations that
\[L=\eps^{-1/3},\quad G=\mathrm{const},\quad \ell=t/L^3+\pi,\quad g=-t-\pi.\]

We perform Taylor expansion of $\Delta H$ for large $r_P$ to get
{\small\begin{equation}\label{EqHeqRPC}
\begin{aligned}
&\Delta H=-\dfrac{1}{2r_P^3}(\mu|r_J|^2+(1-\mu)|r_S|^2)+\dfrac{3}{2r_P^5}(\mu(q_J\cdot q_P)^2+(1-\mu)(q_S\cdot q_P)^2)+O(1/r_P^4) \\
&=-\dfrac{\mu(1-\mu)}{2r^3_P}\left(1-\dfrac{3}{r_P^2}\langle (1,0),q\rangle^2\right)+O(1/r_P^4)\\
& \dot L=-\dfrac{\partial \Delta H}{\partial \ell}=-\dfrac{\partial \Delta H}{\partial q}\dfrac{\partial q}{\partial \ell}=O(\mu\eps^2),\quad \dot \ell=\dfrac{1}{L^3}+\dfrac{\partial \Delta H}{\partial L}=\dfrac{1}{L^3}+\dfrac{\partial \Delta H}{\partial q}\dfrac{\partial q}{\partial L}=\dfrac{1}{L^3}+O(\mu\eps^{7/3}),\\
& \dot G=-\dfrac{\partial \Delta H}{\partial g}=-\dfrac{\partial \Delta H}{\partial q}\dfrac{\partial q}{\partial g}=O(\mu\eps^2),\quad \dot g=-1+\dfrac{\partial \Delta H}{\partial G}=-1+\dfrac{\partial \Delta H}{\partial q}\dfrac{\partial q}{\partial G}=-1+O(\mu\eps^{2}),\\
\end{aligned}
\end{equation}}
where we used \[r_P\geq (1/2)\eps^{-2/3},\quad  \dfrac{\partial \Delta H}{\partial q}=O(\mu/r_P^4)=O(\mu \eps^{8/3}),\quad \mathrm{as}\ \eps\to 0,\] as well as
the computations \[\left(\dfrac{\partial }{\partial L},\dfrac{\partial }{\partial \ell},\dfrac{\partial }{\partial G},\dfrac{\partial }{\partial g}\right)q=O(\eps^{-1/3}, \eps^{-2/3}, \eps^{-2/3},\eps^{-2/3})\] obtained using \eqref{Equl}, \eqref{eq: delaunay}.

For $\mu>0$, after integrating the $\dot g,\ \dot \ell$ equations over time $T/2\simeq \pi k\eps^{-1}$, we get that \eqref{eq: barrar} becomes
\[-\dfrac{T}{2}+O(\mu\eps)=-m\pi,\quad \dfrac{T/2}{L(0)^3}+O(\mu\eps^{4/3})=k\pi.\]
A little simplification using $m/k=1/\eps$ gives
\begin{equation}\label{EqTL}T+O(\mu\eps)=2\pi m,\quad L(0)+O(\mu\eps)=\left(\dfrac{m}{k}\right)^{1/3},\end{equation}
where the $O$ notation is used either as $\mu\to 0$ or $\eps\to 0.$
For $\mu=0$, solution exists even in the case $\eps$ is not small. So we apply implicit function theorem to get solution $T,L$ for small enough $\mu$. This is  how the authors prove part (a) of the lemma in \cite{9}. (Compared with \cite{9}, we use an extra assumption $e\leq 1/2$ to get lower bound for $r_P$. This is not needed for $\mu$ small. Instead we need to exclude finite possible $e$ value to avoid collision. )

Next, we consider part $(b)$. We want to treat $\eps$ to be a small quantity. Since the variables  $T,\ L$ depend on $\eps$, we get rid of the $\eps$ dependence by setting $T=\eps^{-1} \hat T$ and $L=\eps^{-1/3} \hat L$ to get equations for $\hat T,\hat L$
\[\hat T+O(\mu\eps^2)=\eps 2 m\pi=2k\pi,\quad \hat L+O(\mu\eps^{4/3})=\eps^{1/3}\left(\dfrac{m}{k}\right)^{1/3}=1.\]
For $\eps=0$, solution $\hat L,\hat T$ exists. Implicit function theorem implies that for small enough $\eps$, we have solutions $\hat T,\hat L$ regardless of the size of $\mu$, hence we get $T,L$. This proves part $(b)$.

For part $(c)$, we consider the $\dot g$ equation.  The idea is to show the $O(\mu\eps^2)$ perturbation in the $\dot g$ equation gives nonzero contribution after integrating over one period of the ellipse so that in \eqref{EqTL} $T$ cannot be $2m\pi$.
We consider the leading term in $\dfrac{\partial \Delta H}{\partial q}\dfrac{\partial q}{\partial G}$. The remaining terms would be much smaller than the leading term for $\eps$ small (hence $r_P$ large). We need to study
$\dfrac{d}{dG}\left[\dfrac{-1}{r^3_P}\left(1-\dfrac{3}{r_P^2}\langle (1,0),q\rangle^2\right)\right]$.
We consider first the term $\dfrac{d}{dG}\dfrac{-1}{r^3_P}=\dfrac{3}{2|q|^5}\dfrac{\partial |q|^2}{\partial G}$. We have the following computation using \eqref{Equl} and \eqref{eq: delaunay} in Appendix \ref{AppendixDelaunay}
\begin{equation}
\begin{aligned}
&|q|^2=L^4(\cos u-e)^2+L^2G^2\sin^2 u=L^4(1-e\cos u)^2\\
&\dfrac{\partial |q|^2}{\partial G}=2L^4(1-e\cos u)\left(-\dfrac{\partial e}{\partial G}\cos u+e\sin^2 u\dfrac{\partial e}{\partial G}\dfrac{1}{1-e\cos u}\right)=2L^4(e-\cos u)\dfrac{\partial e}{\partial G}.
\end{aligned}\nonumber
\end{equation}
Since we need to integrate along the orbit over time $T$, as an approximation we integrate along the unperturbed
elliptic orbit over one period and estimate the error later on. %The error that we introduce is $O(\mu\eps)$ times the leading term since we know from the \eqref{EqHeqRPC} that the oscillations of $L,G$ are $O(\mu\epsilon)$
We have \[dt=L^3 d\ell=L^3(1-e\cos u)du\] using \eqref{Equl} since $e$ is constant, so it is enough to integrate $u$ over $2\pi$.
To simplify the calculation, we consider small $e$ but we need to keep in mind that $\dfrac{\partial e}{\partial G}=-G/(L^2e)$ is singular when $e=0$. We get
\begin{equation}
\begin{aligned}
&\dfrac{\partial |q|^2}{\partial G}L^3(1-e\cos u)du=2L^7(e-\cos u)(1-e\cos u)\dfrac{\partial e}{\partial G}du.\\
&\int_0^T\dfrac{3}{2|q|^5}\dfrac{\partial |q|^2}{\partial G} dt =\int_{0}^{2\pi}\dfrac{3}{2|q|^5}\dfrac{\partial |q|^2}{\partial G}L^3(1-e\cos u)du\\
&=\dfrac{3}{2}L^{-3}\dfrac{\partial e}{\partial G}\int_0^{2\pi}(e-\cos u)(1-e\cos u)^{-4}du=-\dfrac{3\pi}{2}\dfrac{G}{L^5}(1+O(e)),\ \quad e\to 0.
\end{aligned}\nonumber
\end{equation}
Notice the dependence on $e$ is analytic in the final integral, so there cannot be an interval of $e$ that vanishes the integral.
Next we consider
\begin{equation*}\begin{aligned}
&\langle (1,0),q_P\rangle^2=(L^2(\cos u-e)\cos g-LG\sin u\sin g)^2\\
&=L^4(\cos u-e)^2\cos^2 g+L^2G^2\sin^2 u\sin^2 g+L^3G\sin u(\cos u-e)\sin 2g,\end{aligned}\end{equation*}
using \eqref{eq: delaunay} rotated by angle $g$. 
Notice we have $\dot g\simeq -1$. When we convert $dt$ to $du$ as above, we get $g$ is fast rotating, $\dot g\simeq -L^3 (1-e\cos u)\dot u=-\eps^{-1}(1-e\cos u)\dot u$. So we can replace $\cos^2 g,\ \sin^2 g$ by $1/2+o(1)$ and $\sin 2g$ by $o(1)$ when doing the integration w.r.t. $u$ according to Riemann-Lebesgue Lemma, so $\langle (1,0),q_P\rangle^2$ becomes $\dfrac{1}{2}(L^4(\cos u-e)^2+L^2G^2\sin^2 u)(1+o(1))=\dfrac{1}{2}|q|^2(1+o(1))$. Now we can handle the remaining term \begin{equation*}\begin{aligned}
&\int_0^T\dfrac{d}{dG}\left[\dfrac{3}{r_P^5}\langle (1,0),q\rangle^2\right]dt=\dfrac{d}{dG}\int_0^T\left[\dfrac{3}{r_P^5}\langle (1,0),q\rangle^2\right]dt\\
&=\dfrac{d}{dG}\int_0^{2\pi}\left[\dfrac{3}{r_P^5}\langle (1,0),q\rangle^2\right]L^3(1-e\cos u)du\\
&=\dfrac{d}{dG}\int_0^{2\pi}\left[\dfrac{3}{2r_P^3}(1+o(1))L^3(1-e\cos u)\right]du,\quad \eps\to 0,
\end{aligned}\end{equation*}
which goes back to the $\dfrac{d}{dG}\dfrac{-1}{r^3_P}$ case done in the above. Moreover, we see that the two cases do not cancel each other. 

We then find some $0<e<1/2$ not close to 0 such that the final integral is nonvanishing and study the error coming from the fact that we use unperturbed elliptic orbit to approximate the real orbit. As $\eps\to 0,$ the oscillation of $L,G$ within one period is $O(\mu\eps)$ according to \eqref{EqHeqRPC}. In the expression for $\dfrac{\partial |q|^2}{\partial G}$ the leading error term coming from the oscillation of $L,G$ is $O(L^3\mu\eps)=O(\mu)$, so the leading error in $\dfrac{1}{|q|^5}\dfrac{\partial |q|^2}{\partial G}$ is $O(\eps^{10/3}\mu)$. After integrating over time $T\simeq 2k\pi/\eps$ the error is $O(\eps^{7/3}\mu)\ll O(\eps^{4/3})=G/L^5.$ So we get part $(c)$ of the lemma.
\end{proof}
Next, we give the proof of Lemma \ref{Lm: continuition}. The proof is given in \cite{8}, Chapter 9. We follow that proof with some necessary modifications to get properties of the quasi-periodic orbit that will be used later.
\begin{proof}[proof of Lemma \ref{Lm: continuition}]
We write the three-body problem
\[H_3=\left[\dfrac{|p_P|^2}{2\delta}-\sum_{i=J,S}\dfrac{\delta m_i}{|q_i-q_P|}\right]+\left[\sum_{i=J,S}
\dfrac{|p_i|^2}{2m_i}-\dfrac{(1-\mu)\mu}{|q_S-q_J|}\right],\]
where we have $m_J=\mu,\ m_S=1-\mu$. We choose the mass center as the origin so that we have $(1-\mu) q_S+\mu q_J+\dt q_P=0$ and $p_S+p_J+p_P=0$.
We next introduce the following Jacobi coordinates
\begin{equation}\begin{cases}
&q_1=q_J-q_S\\
&q_2=q_P-((1-\mu)q_S+\mu q_J)
\end{cases}\quad \begin{cases}
&p_1=(1-\mu)p_J-\mu p_S\\
&p_2=p_P
\end{cases}\label{EqJacobi}\end{equation}
to reduce the system into the following form (see Chapter 7 of \cite{8})
\[H_3=\left[\dfrac{|p_2|^2}{2\beta}-\dfrac{(1-\mu)\dt}{|q_2+\mu q_1|}-\dfrac{\mu\dt}{|q_2-(1-\mu) q_1|}\right]+\left[\dfrac{|p_1|^2}{2\al}-\dfrac{\al}{|q_1|}\right].\]
where $\al=\mu(1-\mu),\ \beta=\dt/(1+\dt).$ We can also check that the total angular momentum becomes $q_1\times p_1+q_2\times p_2$. We write the system in rotating coordinates to get
\begin{equation}\label{EqH3rot0}H_{3,rot}=\left[\dfrac{|p_2|^2}{2\beta}-\dfrac{(1-\mu)\dt}{|q_2+\mu q_1|}-\dfrac{\mu\dt}{|q_2-(1-\mu) q_1|}-q_2\times v_2\right]+\left[\dfrac{|p_1|^2}{2\al}-\dfrac{\al}{|q_1|}-q_1\times p_1\right].\end{equation}
The second parenthesis is a two-body problem in rotating coordinates, we transform it to polar coordinates $(p_1,q_1)\to (R,r,\Theta,\theta)$ according to the Appendix \ref{AppendixPolar} to get
\[H_{2,rot}=\dfrac{1}{2\al}(R^2+\dfrac{\Theta^2}{r^2})-\Theta-\dfrac{\al}{r}.\]
We also know that the Kepler circular motion is a critical point of the two-body problem in rotating coordinates (see the Appendix \ref{AppendixPolar} for the derivation). So we linearize $H_{2,rot}$ around the circular motion $R=0,\ r=1,\ \Theta=\al,\ \theta=0.$ We use the rescaling \begin{equation}\label{EqTilde}p_2=\dt v_2,\ R=\sqrt{\dt} \tilde R,\ r=1+\sqrt{\dt}\tilde r.\end{equation} We consider only the total angular momentum being $\al+c\dt$ for some constant $c$ so that $\Theta=\al+c\dt-\dt q_2\times v_2$ as the angular momentum $q_1\times p_1$. Moreover, since the linearization is done in a neighbourhood of the Kepler circular motion, we have $q_1=(1,0)+O(\sqrt{\dt})$.
After the linearization, the Hamiltonian system can be written as
\begin{equation}\label{EqH3rot}\dfrac{H_{3,rot}}{\dt^2}=\left[\dfrac{|v_2|^2}{2}-\dfrac{(1-\mu)}{|q_2+\mu (1,0)|}-\dfrac{\mu}{|q_2-(1-\mu) (1,0)|}-q_2\times v_2\right]+\dfrac{1}{2}\left[ \dfrac{\tilde R^2}{\al}+\al \tilde r^2\right]+O(\sqrt{\dt}).\end{equation}

Thus to first order, the Hamiltonian of the full 3-body problem decouples into the sum of the
Hamiltonian for the RPC3BP and a harmonic oscillator.

The phase space is now three degrees of freedom with coordinates $(q_2,v_2,\tilde R,\tilde r)\in \R^6$. We apply the Lyapunov center theorem. When $\dt=0$, we have a periodic orbit which is the product of the fixed point $\tilde R=0,\ \tilde r=0$ and the periodic orbit of the RPC3BP constructed in Lemma \ref{Lm: barrar}. The period is not an integer multiple of $2\pi$ according to part (c) of Lemma \ref{Lm: barrar}, so that we can apply Lyapunov center theorem to get a periodic orbit of the three body problem in coordinates $(q_2,v_2,\tilde R,\tilde r,)$ for $\dt>0$ (see Chapter 9.6 of \cite{8} for more details). If we view the resulting periodic orbit in the eight dimensional space with coordinates $(q_2,v_2,\tilde R,\tilde r,\Theta,\theta)$, we get one more angular variable $\theta$ so that we get a quasi-periodic orbit.
\end{proof}
%Next we show that the motion of Jupiter and the planet deform only slightly. We give quantitative estimates in next lemma.
\begin{Lm} \label{LmEst} In the S-J-P three-body problem, if we assume $\dt=O(\eps^{3})$ as $\eps\to 0$ and we also assume the total angular momentum is $\al+c\dt$ for some constant $c$. Then for the orbit in the previous Lemma \ref{Lm: continuition}, we have \\
(a) the following estimates for the planet
\begin{equation}
\begin{aligned}
&|L_P(t)-L_P(0)|,\quad |G_P(t)-G_P(0)|\leq C \eps+o(1)\quad \mathrm{as}\ \dt\to 0,\\
\end{aligned}\end{equation}
(b) the following estimates for the motion of the sun and Jupiter
\[(\tilde R,\tilde r)=o(\sqrt{\dt}),\quad \dt\to 0,\]
where $\tilde R,\ \tilde r$ are defined in \eqref{EqTilde} and $R,\ r$ therein are among the polar coordinates of $p_1,q_1$ in \eqref{EqJacobi} using Appendix \ref{AppendixPolar}.

(c) and the following estimates for Jupiter
\[\dot{\theta}=-2\sqrt{\dt}\tilde r+\dfrac{1}{\al}(c\dt -q_2\times p_2)+O(\dt^{3/2}),\quad \dt\to 0\]
where $c$ is a constant, $\sqrt{\dt} \tilde{r}=r-1$ is the radial deviation from the circular motion and $\theta$ is the polar angle in the rotating coordinates of uniform angular velocity 1.
\end{Lm}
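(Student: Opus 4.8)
The plan is to extract all three bounds from the decoupled form of the rescaled Hamiltonian established in the proof of Lemma~\ref{Lm: continuition}. By \eqref{EqH3rot}, in the coordinates $(q_2,v_2,\tilde R,\tilde r)$ with conjugate angular pair $(\Theta,\theta)$ the Hamiltonian is the RPC3BP in $(q_2,v_2)$, plus the frequency-$1$ harmonic oscillator $\tfrac12(\tilde R^2/\al+\al\tilde r^2)$, plus a remainder of order $O(\sqrt\dt)$; and the orbit of Lemma~\ref{Lm: continuition} is the Lyapunov/IFT continuation from $\dt=0$ of the product of the RPC3BP periodic orbit of Lemma~\ref{Lm: barrar} with the equilibrium $\tilde R=\tilde r=0$, so at $\dt=0$ the Sun--Jupiter pair moves on the exact circle $r\equiv1$. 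This continuation is unique and nondegenerate because the Floquet multipliers of the $\dt=0$ orbit are those of the RPC3BP orbit together with $e^{\pm iT}$, and $T/2\pi\notin\Z$ by part (c) of Lemma~\ref{Lm: barrar}. I would take $c$ to be the planet's initial angular momentum $(q_2\times v_2)(0)$; then, since $q_2\times v_2\approx G_P$ varies by only $O(\mu\eps)$ along the RPC3BP flow by \eqref{EqHeqRPC}, one gets $\Theta-\al=c\dt-q_2\times p_2=O(\mu\dt\eps)=o(\dt)$ along the whole orbit.

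For part (a): $(L_P,G_P)$ are the Delaunay variables of the $(q_2,v_2)$ Kepler problem, so they evolve under the RPC3BP together with the $O(\sqrt\dt)$ remainder. Along the RPC3BP flow \eqref{EqHeqRPC} gives $\dot L_P,\dot G_P=O(\mu\eps^2)$, which over the planet's period $T=O(\eps^{-1})$ integrates to $O(\mu\eps)\le C\eps$. The $O(\sqrt\dt)=O(\eps^{3/2})$ remainder perturbs $\dot L_P,\dot G_P$ by $O(\eps^{3/2})$, hence by Gronwall over a time interval of length $O(\eps^{-1})$ perturbs $(L_P,G_P)$ by $O(\eps^{1/2})=o(1)$ as $\dt\to0$; since the continued orbit is quasi-periodic this persists for all $t$. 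Summing gives (a).

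For part (b): in the normal form $(\tilde R,\tilde r)$ satisfy a frequency-$1$ oscillator driven by the part of the $O(\sqrt\dt)$ remainder that survives at $\tilde R=\tilde r=0$, the forcing coming from the slow ($O(\eps)$) RPC3BP motion. The point I would have to establish is that this forcing is actually $o(\sqrt\dt)$: (i) the cubic and higher self-interaction terms of the oscillator vanish to second order at the origin and so contribute only at higher order along an orbit that stays near it; (ii) the linear-in-$\tilde r$ term produced by $\Theta\neq\al$ has coefficient $O((\Theta-\al)/\sqrt\dt)=O(\mu\sqrt\dt\,\eps)=o(\sqrt\dt)$ thanks to the choice of $c$; (iii) the genuine coupling of $(\tilde R,\tilde r)$ to the planet enters only through $q_1-(1,0)=O(\sqrt\dt\,\tilde r)+O(\tt)$ multiplied by the gradient $O(1/|q_2|^2)=O(\eps^{4/3})$ of the planet's potential at the separation $|q_2|=O(\eps^{-2/3})$, which, after a short bootstrap using the crude continuity bound $(\tilde R,\tilde r)=O(\sqrt\dt)$ (whence $\tt=O(\sqrt\dt\,\eps^{-1})=O(\eps^{1/2})$), is $o(\sqrt\dt)$ under $\dt=O(\eps^{3})$. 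Since the continuation starts from the zero-amplitude orbit, $(\tilde R,\tilde r)$ then stays $o(\sqrt\dt)$. I expect this to be the main obstacle: one must check carefully both that the choice of $c$ annihilates the leading angular-momentum forcing and that the residual forcing is genuinely $o(\sqrt\dt)$ rather than merely $O(\sqrt\dt)$, since it is exactly this that upgrades the crude continuity bound to the asserted one.

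For part (c): this is Hamilton's equation for the angle. Since the $O(\sqrt\dt)$ coupling term in \eqref{EqH3rot} depends on $q_1$ and $q_2$ but not on $p_1$, hence not on $\Theta$, one has $\dot\tt=\partial H_{2,rot}/\partial\Theta=\Theta/(\al r^2)-1$. Substituting $\Theta=\al+c\dt-q_2\times p_2$ and $r=1+\sqrt\dt\,\tilde r$ and expanding,
\[
\dot\tt=\Big(1+\tfrac{c\dt-q_2\times p_2}{\al}\Big)\big(1-2\sqrt\dt\,\tilde r+O(\dt\,\tilde r^2)\big)-1=-2\sqrt\dt\,\tilde r+\tfrac1\al\big(c\dt-q_2\times p_2\big)+O(\dt^{3/2}),
\]
where the dropped terms $\dt\,\tilde r^2$ and $\sqrt\dt\,\tilde r\,(c\dt-q_2\times p_2)$ are $O(\dt^{3/2})$ using $\tilde r=o(\sqrt\dt)$ from part (b) and $c\dt-q_2\times p_2=o(\dt)$. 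This is precisely the formula asserted in (c).
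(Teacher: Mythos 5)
Your parts (a) and (c) are in line with the paper's argument: (a) combines the RPC3BP estimate $\dot L_P,\dot G_P=O(\mu\eps^2)$ over the period $O(1/\eps)$ with a continuity-in-$\dt$ comparison (the paper does the latter by a triangle inequality against the $\dt=0$ orbit rather than by Gronwall, but the content is the same), and (c) is exactly Hamilton's equation for $\theta$ in the rescaled variables.

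For part (b), however, there is a genuine gap at the final step, and it is precisely the step you yourself flag as ``the main obstacle.'' You correctly decompose the driving terms of the $(\tilde R,\tilde r)$ oscillator and show that their time-integrated contribution over a period is $o(\sqrt\dt)$, but you then conclude ``since the continuation starts from the zero-amplitude orbit, $(\tilde R,\tilde r)$ then stays $o(\sqrt\dt)$.'' This does not follow: after integrating $\dot Z=AZ+F$ one still has a homogeneous piece $e^{At}Z(0)$, and the size of $Z(0)$ for the \emph{periodic} orbit is governed by $(I-e^{AT_\mu})Z(0)=\int_0^{T_\mu}e^{A(T_\mu-s)}F(s)\,ds$; one must know that $I-e^{AT_\mu}$ is invertible with uniformly bounded inverse to pass from $F=o(\sqrt\dt)$ to $Z=o(\sqrt\dt)$. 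This is exactly where $T_\mu/(2\pi)\notin\Z$ from Lemma~\ref{Lm: barrar}(c) enters. You mention the Floquet multipliers $e^{\pm iT}$ at the very start, but never feed that nonresonance into the estimate. The paper's proof supplies the missing step by a period-mismatch contradiction: if $Y$ (the matching homogeneous solution) were not $o(\sqrt\dt)$, then $\bar Z=Z/\sqrt\dt$ would agree up to $o(1)$ with a nontrivial $2\pi$-periodic harmonic oscillation $\bar Y$, yet $\bar Z$ has period close to $T_\mu\notin 2\pi\Z$, forcing $\bar Y(0)=\bar Y(T_\mu\bmod 2\pi)+o(1)$, which fails. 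You need an argument of this kind (or the equivalent invertibility statement); asserting the conclusion from ``starting at zero amplitude'' is not enough.

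A secondary point concerns your choice $c=(q_2\times v_2)(0)$. Since $q_2\times v_2=G_P=O(\eps^{-1/3})$, this $c$ is not an $O(1)$ constant as the lemma's hypothesis intends, and it is not what the paper does: the paper simply estimates $c\dt-q_2\times p_2=O(\dt\eps^{-1/3})$ and then observes that the \emph{time-integrated} contribution over $T_\mu=O(1/\eps)$ is $O(\dt\eps^{-4/3})=o(\sqrt\dt)$ under $\dt=O(\eps^3)$. Your item (ii) instead tries to make the \emph{instantaneous} forcing $o(\sqrt\dt)$ by cancelling the leading part of $q_2\times p_2$ against $c\dt$, which requires the nonstandard $c$ and is unnecessary. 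Dropping the special choice of $c$ and keeping the paper's coarser bound, integrated over the period, gives the same $o(\sqrt\dt)$ and keeps part (c) of the statement meaningful (there $\frac1\al(c\dt-q_2\times p_2)$ is supposed to survive as a term of order $\dt\eps^{-1/3}$, not be annihilated).
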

\begin{proof}
%We are only interested in the quasi-periodic orbit that we constructed. In the proof of the previous theorem, we know that our variables $(p_P,q_P, \tilde R_J,\tilde r_J)$ are periodic functions of $t$. The motion of $r,R$ is a harmonic oscillator of size $O(\dt)$ considering the rescaling we did in $Z-Z^*=
 %U.$ We want to show that the perturbation coming from the interaction term is small.
To show part (a), notice the motion of the planet is periodic with period $O(1/\eps)$. When $\dt=0$, we integrate the $\dot L,\ \dot G$ equations in \eqref{EqHeqRPC} over one period $O(1/\eps)$ to get that the oscillation of $L,G$ during one period is $O(\eps) $ as $\eps\to 0$. When $\dt>0$ but small, the difference of the value $L(t),G(t)$ from that of the case $\dt=0$ is $o(1)$. Namely we have {\small\[|L_{\dt>0}(t)-L_{\dt>0}(0)|\leq |L_{\dt>0}(t)-L_{\dt=0}(t)|+|L_{\dt=0}(t)-L_{\dt>0}(0)|+|L_{\dt=0}(0)-L_{\dt>0}(0)|\leq C\eps+o(1).\]}
Next, to get part (c) we linearize the Hamiltonian $H_{2,rot}$ around $r=1,\ \Theta=\al$. We use $\sqrt{\dt}\tilde r=r-1$ and $\Theta=\al+c\dt-q_2\times p_2$.

Finally, we consider part (b). We formally write the linearized Hamiltonian equation for two-body part in \eqref{EqH3rot} as $\dot Y=AY$, which is a harmonic oscillator of period $2\pi$. Next, we write the equation for $Z:=(\tilde R,\tilde r)$ as $\dot Z=AZ+O(\sqrt{\dt}|Z|^2+\dt\eps^{-1/3}+\sqrt{\dt} \eps^{4/3})$. To see the form of the perturbation, we analyze the $O(\sqrt{\dt})$ term in \eqref{EqH3rot}. The $O(\sqrt{\dt}|Z|^2)$ term comes from the cubic and higher order terms in the Hamiltonian $H_{2,rot}$ after linearization keeping $\Theta$ constant. The $O(\sqrt{\dt}\eps^{4/3})$ comes from linearizing $q_1$ around $(1,0)$ in the RPC3BP part of \eqref{EqH3rot0} noticing $|q_P|\geq c\eps^{-2/3}$. Finally, $O(\dt\eps^{-1/3})$ comes from the oscillation of $\Theta$, which is $c\dt+q_2\times p_2=O(\dt\eps^{-1/3})$ since $q_2\times v_2=G$ is the angular momentum of the RPC3BP proportional to $L=\eps^{-1/3}$ due to the fact $e=\sqrt{1-(G/L)^2}<1/2$.  We then take the difference to get
\[(Z-Y)'=A(Z-Y)+O(\sqrt{\dt}|Z|^2+\dt\eps^{-1/3}+\sqrt{\dt} \eps^{4/3}).\]
The solution has the form \[Z(t)=Y(t)+e^{At}(Z-Y)(0)+\int_0^t e^{A(t-s)}O(\sqrt{\dt} |Z|^2+\dt\eps^{-1/3}+\sqrt{\dt} \eps^{4/3})(s)ds.\]
We compare $Z$ with a solution of the harmonic oscillator solution $Y$ with the same initial condition so that
\[Z(t)=Y(t)+\int_0^t e^{A(t-s)}O(\sqrt{\dt} |Z|^2+\dt\eps^{-1/3}+\sqrt{\dt} \eps^{4/3})(s)ds.\]
The harmonic oscillator has period $2\pi$ and the periodic orbit that we constructed in Lemma \ref{Lm: continuition} has period $O(1/\eps)$. Moreover, we have $|e^{At}|$ is uniformly bounded since $\dot Y=AY$ is a harmonic oscillator. When $t$ is less than one period $O(1/\eps)$, we have for $\dt=O(\eps^{3})$ as $\eps\to 0$ that
\[Z(t)=Y(t)+O(\sqrt{\dt}/\eps |Z(s)|^2+\dt\eps^{-4/3}+\sqrt{\dt}\eps^{1/3})=Y(t)+O(\sqrt{\dt}/\eps |Z(s)|^2)+o(\sqrt{\dt}).\]
We know $Y(t)=o(1)$ as $\dt\to 0$ since $Z(t)=(\tilde R,\tilde r)\to 0$ as $\dt\to 0$. Now assume $Y(t)$ is not $o(\sqrt{\dt})$, which means $\lim\sup_\dt\max_t|Y(t)/\sqrt{\dt}|>0.$ We denote by $Z/\sqrt{\dt}=\bar Z$ and $Y/\sqrt{\dt}=\bar Y$. So we get $\bar Z=\bar Y+o(1)$ as $\dt\to 0$ and $\eps$ fixed. We know $\bar Z$ is periodic whose period is close to that of the periodic solution of RPC3BP. i.e. $T=O(1/\eps)$ in part (c) of Lemma \ref{Lm: barrar}, esp. $T/(2\pi)\notin \Z$. However $\bar Y$ is harmonic oscillator of period $2\pi.$ So we get in the LHS of $\bar Z=\bar Y+o(1)$ that $\bar Z(0)=\bar Z(T)$, which forces the RHS to satisfy $\bar Y(0)=\bar Y(T)+o(1)=\bar Y(T\ \mathrm{mod}\ 2\pi)+o(1)$. This cannot be true for $\dt$ small enough since $\bar Y(t)$ is just a nonvanishing multiple of $(\cos t,\sin t)$ up to a time translation in the limit $\dt\to 0$.
\end{proof}

\subsection{ The motion of the asteroid }\label{subsection: asteroid} In this section, we consider the motion of the asteroid under the gravitational force from the sun, Jupiter, and the planet.
The following theorem establishes that the Hamiltonian describing the motion of the asteroid in the 4BP(S-J-P-A) is quasi-periodic perturbation of the RCP3BP(S-J-A).  This is the Hamiltonian for which we prove the existence of diffusing orbits.
\begin{Thm}\label{Thm: ham}
If the motion of the full three-body problem S-J-P is chosen to be a quasi-periodic motion constructed in Lemma \ref{Lm: continuition}, then for sufficiently small $\eps,\dt$ and $\dt=O(\eps^{3})$, the Hamiltonian of the motion of the asteroid in the rotating coordinates can be written into the form of a RPC3BP with a small and slow quasi-periodic perturbation:
\begin{equation}
H_{A,Rot}(L_A,\ell_A,G_A,g_A,\eps t)=-\dfrac{1}{2L^{2}_{A}}-G_{A}+\Delta H+f(\ell_{A},L_{A},g_{A},G_{A},\eps t),\label{eq: main}
\end{equation}
where $(L_A,\ell_A,G_A,g_A)\in T^*\T^2$. The perturbation $f$ has the form
\begin{equation}
\begin{aligned}
&f(\ell_{A},L_{A},g_{A},G_{A},\eps t):=-\dot{\theta}q_A \times p_A +(1-\mu)\left(\dfrac{1}{r_1}-\dfrac{1}{r_{AS}}\right) +\left(\dfrac{\mu}{r_2}-\dfrac{\mu}{r_{AJ}}\right)-\dfrac{\dt}{r_{AP}}.
\end{aligned}\label{eq: pert}
\end{equation}
\end{Thm}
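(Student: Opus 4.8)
The plan is to exploit that, since the asteroid is massless, it does not influence the Sun--Jupiter--Planet triple, whose trajectory $t\mapsto(Q_S,Q_J,Q_P)(t)$ (with conjugate momenta) is exactly the quasi-periodic solution furnished by Lemma~\ref{Lm: continuition}, with the quantitative estimates of Lemma~\ref{LmEst}. In the inertial frame centred at the fixed centre of mass, with the asteroid's mass normalised to $1$, its Hamiltonian is simply
\[
H_A^{\mathrm{in}}(Q_A,P_A,t)=\frac{|P_A|^2}{2}-\frac{1-\mu}{|Q_A-Q_S(t)|}-\frac{\mu}{|Q_A-Q_J(t)|}-\frac{\dt}{|Q_A-Q_P(t)|},
\]
which is time-dependent only through the known motion of the triple. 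One then (i) passes to the rotating frame in which the Sun--Jupiter line is the $x$-axis, (ii) subtracts from each Newtonian term its value at the nominal RPC3BP positions $(-\mu,0)$ and $(1-\mu,0)$, (iii) puts the leftover Kepler part into Delaunay variables, and (iv) uses Lemma~\ref{LmEst} to check that the remainder is small and slow.

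\emph{The reduction.} Let $\psi(t)$ be the polar angle of $Q_J(t)-Q_S(t)$; by Lemma~\ref{LmEst}(c) we may write $\psi(t)=t+\theta(t)$ with $\dot\theta=O(\dt\eps^{-1/3})$ slowly varying, so the frame rotates at instantaneous rate $1+\dot\theta$. Rotating both position and momentum by $-\psi(t)$ is a time-dependent symplectic change of variables $(Q_A,P_A)\mapsto(q_A,p_A)$ whose generator is the rotation-invariant angular momentum $q_A\times p_A$; hence, writing $r_{AS},r_{AJ},r_{AP}$ for the (rotation-invariant) mutual distances of Definition~\ref{def: 2},
\[
H_{A,\mathrm{rot}}=\frac{|p_A|^2}{2}-(q_A\times p_A)-\dot\theta\,(q_A\times p_A)-\frac{1-\mu}{r_{AS}}-\frac{\mu}{r_{AJ}}-\frac{\dt}{r_{AP}},
\]
in which $-(q_A\times p_A)$ is the usual centrifugal/Coriolis term of the RPC3BP and $-\dot\theta\,(q_A\times p_A)$ is the first summand of $f$. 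Using $-\frac{1-\mu}{r_{AS}}=-\frac{1-\mu}{r_1}+(1-\mu)\left(\frac{1}{r_1}-\frac{1}{r_{AS}}\right)$, $-\frac{\mu}{r_{AJ}}=-\frac{\mu}{r_2}+\left(\frac{\mu}{r_2}-\frac{\mu}{r_{AJ}}\right)$, and $-\frac{1-\mu}{r_1}-\frac{\mu}{r_2}=-\frac{1}{|q_A|}+\Delta H$ with $\Delta H=\frac{1}{|q_A|}-\frac{1-\mu}{r_1}-\frac{\mu}{r_2}$, one isolates the Kepler piece $\frac{|p_A|^2}{2}-(q_A\times p_A)-\frac{1}{|q_A|}$, which becomes $-\frac{1}{2L_A^2}-G_A$ under the symplectic Delaunay transformation of Appendix~\ref{AppendixDelaunay} (as $q_A\times p_A=G_A$). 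Collecting the terms reproduces exactly \eqref{eq: main} and \eqref{eq: pert}.

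\emph{Smallness and slowness of $f$.} It remains to check $f=o(1)$ as $\dt\to0$ and that its time dependence is slow and quasi-periodic. From the (rotation-invariant) centre-of-mass relation $(1-\mu)q_S+\mu q_J+\dt q_P=0$ and $|q_J-q_S|=1+o(\dt)$ (Lemma~\ref{LmEst}(b)), the positions of the primaries in the rotating frame are $q_S=(-\mu,0)+O(\dt|Q_P|)$ and $q_J=(1-\mu,0)+O(\dt|Q_P|)$; since $|Q_P|\ge\tfrac12\eps^{-2/3}$ and $\dt=O(\eps^{3})$, they stay within $O(\eps^{7/3})$ of their RPC3BP positions, so $(1-\mu)(1/r_1-1/r_{AS})$ and $\mu/r_2-\mu/r_{AJ}$ are $O(\eps^{7/3})$ away from the collision sets. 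As the diffusing orbits we construct keep the asteroid in a fixed region about the primaries while $|Q_P|\gtrsim\eps^{-2/3}$, the planetary term obeys $\dt/r_{AP}=O(\dt\eps^{2/3})=O(\eps^{11/3})$, while $\dot\theta\,(q_A\times p_A)=O(\eps^{8/3})$ by Lemma~\ref{LmEst}(c); hence $f=O(\eps^{7/3})=o(1)$. For the time dependence, step (i) has removed the only fast frequency of the triple --- the rotation of the Sun--Jupiter line --- so $\theta(t)$, $|q_J(t)-q_S(t)|$ and the slow part of $q_P(t)$ vary only with the planet's orbital phase, i.e.\ quasi-periodically in $\eps t$; the genuinely fast remainders that survive are all of order $\le O(\eps^{7/3})$, and since the GTL mechanism of \cite{2} is insensitive to the form of the time dependence, they are harmless.

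\emph{The main obstacle.} Steps (i)--(iii) are bookkeeping and the Delaunay reduction is classical; the real content is step (iv), namely controlling how far the triple departs from its RPC3BP template in the rotating frame --- in particular that the radial breathing of the Sun--Jupiter binary is $o(\dt)$ rather than merely $O(\dt)$ and that the rotation non-uniformity $\dot\theta$ is slow. Both facts are precisely what Lemma~\ref{LmEst} supplies, so the proof amounts to matching those estimates term by term against \eqref{eq: pert}.
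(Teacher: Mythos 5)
Your proposal is correct and follows essentially the same route as the paper: pass to the (nonuniformly) rotating frame aligned with the Sun--Jupiter line, add and subtract the Newtonian potentials at the nominal RPC3BP positions $(-\mu,0)$, $(1-\mu,0)$, and read off the RPC3BP part plus the perturbation $f$. The only organizational difference is that you carry out the $o(1)$ smallness estimates inline (and more quantitatively, getting $O(\eps^{7/3})$), whereas the paper states $f=o(1)$ here and defers the detailed bounds to the later Lemma~\ref{Lm: nondeg}.
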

where $\theta$ is defined in part (c) of Lemma \ref{LmEst}. 
Moreover, if we set $\nu=\eps t$, $\nu:=(\nu_1,\nu_2)\in \T^2$, the function $f(\bullet_A,\nu)$ is $o(1)$ as $\dt\to 0$ and $\nu_1$ is $2\pi$ periodic, $\nu_2$ has $o(1)$ frequency in the limit $\dt\to 0$.
\begin{proof}
The Hamiltonian of the motion of the asteroid can be written in the complete form:
\[H_{A}(x_A,y_A,\dot x_A,\dot y_A,\eps t)=\frac{1}{2}\dot{x}_A^{2}+\frac{1}{2}\dot{y}_A^{2}-\dfrac{1-\mu}{r_{AS}}-\dfrac{\mu}{r_{AJ}}-\dfrac{\dt}{r_{AP}},\quad (x_A,y_A,\dot x_A,\dot y_A)\in \R^4.\]
The first two terms are the kinetic energy and the last three terms are the potential energy from the sun, Jupiter and the planet respectively. We set the origin as the mass center of the sun, Jupiter and planet, so that we can use the Jacobi coordinates \eqref{EqJacobi} instead of using $q_S,q_J,q_P$ to describe the background motion. The background motion in Jacobi coordinates is quasi-periodic as we show in the proof of Lemma \ref{Lm: continuition}.

 We write the Hamiltonian in rotating coordinates.  We want that in the rotating coordinates sun and Jupiter lie on a line parallel to the x-axis. As a result this rotating coordinates is not uniform. We use the following transformation.
\[\begin{cases}\ q_A:=\exp(\theta_{1}K)\ (x_A,y_A)^T\\ \ p_A:=\exp(\theta_{1}K)\ (\dot{x}_A,\dot y_A)^T\end{cases},\quad K=\left[\begin{array}{cc}0&-1\\
1&0\end{array}\right]\]
where $\theta_1$ is the polar angle of $q_1=q_J-q_S$ in nonrotating coordinates.

We know that $\exp(\theta_{1}K) $ is an orthogonal matrix, so it is easy to check the following identity.
  \[\left[\begin{array}{cccc}
e^{\theta_{1}K} &   0      \\
0 &   e^{\theta_{1}K}
\end{array}\right]
\left[\begin{array}{cccc}
0 &   \mathrm{Id}      \\
-\mathrm{Id} &   0
\end{array}\right]
\left[\begin{array}{cccc}
e^{\theta_{1}K} &   0      \\
0 &   e^{\theta_{1}K}
\end{array}\right]^{T}
=\left[\begin{array}{cccc}
0 &   \mathrm{Id}      \\
-\mathrm{Id} &   0
\end{array}\right].
  \]
This shows that the change of coordinates is symplectic. The coordinates system $(q_A,p_A)$ is also Cartesian.

In general we have $\dot{\theta}_1\neq$ constant, so the rotating angular velocity of the coordinate frame is not uniform. Instead of $-q_A\times p_A$, the term \[-\dot{\theta_{1}}q_A \times p_A=-\left(1+\dot{\theta}\right)q_A\times p_A\]would appear in the new Hamiltonian as the Coriolis term (c.f. the 6th chapter of \cite{8}), where $\dot{\theta}$ is estimated in part (c) of Lemma \ref{LmEst}. We get the Hamiltonian:
\[H_{A,Rot}=\frac{1}{2}p_A ^{2}-(1+\dot{\theta})q_A \times p_A -\dfrac{1-\mu}{r_{AS}}-\dfrac{\mu}{r_{AJ}}-\dfrac{\dt}{r_{AP}}\]
Now plug in the terms:
\[-\dfrac{1-\mu}{r_1}-\dfrac{\mu}{r_2}+\dfrac{1-\mu}{r_1}+\dfrac{\mu}{r_2}.\]
(Recall $r_1$ and $r_2$ are defined to be the distance from the asteroid to $(-\mu,0),\ (1-\mu,0)$ respectively in Definition \ref{def: 2}).

So the Hamiltonian becomes:
\begin{equation}
\begin{aligned}
& H_{A,Rot} =\left[\frac{1}{2}p_A ^{2}-q_A \times p_A -\dfrac{1-\mu}{r_1}-\dfrac{\mu}{r_2}\right]+\left[-\dot{\theta} q_A \times p_A \right.\\
& \left.\qquad +(1-\mu)\left(\dfrac{1}{r_1}-\dfrac{1}{r_{AS}}\right)+\mu\left(\dfrac{1}{r_2}-\dfrac{1}{r_{AJ}}\right)-\dfrac{\dt^2}{r_{AP}}\right].\\
\end{aligned}\nonumber
\end{equation}
In this expression, the first bracket is the RPC3BP in a uniform rotating coordinates. We estimate the second bracket as $o(1)$ as $\dt\to 0$. We will estimate the second bracket in details in the later Lemma \ref{Lm: nondeg}.

  Since we assume $\dt\ll \eps$, the frequency of the angular variable $\dot\theta=O(\dt)$ in part (c) of Lemma \ref{LmEst} is much smaller than the frequency $\eps$ of the periodic orbit. So we denote by $f(\bullet_A,\eps t)$ the terms in the second bracket above by setting $t=\dfrac{1}{\eps}\cdot \eps t$. The function $f(\bullet_A,\nu)$ is $o(1)$ as $\dt\to 0$. We write $\nu=(\nu_1,\nu_2)\in \T^2$ so that $\nu_2=\theta/\eps$ and $\nu_1$ corresponds to the periodic part $(p_2,q_2,\tilde R,\tilde r)$. Then $\nu_1$ is $2\pi$ periodic and $\nu_2$ has $o(1)$ frequency in the limit $\dt\to 0$. This completes the proof.
\end{proof}

\section{Heteroclinic cycle in the RPC3BP (S-J-A)}\label{section: SJA}
In this section we neglect the planet in the 4BP(S-J-P-A) as defined above in Section ~\ref{section: per}.  The goal is prove the existence of two normally hyperbolic periodic orbits and explain the known numerical evidences on the existence of transverse heteroclinic connections between them for the RCP3BP(S-J-A).
\subsection{The existence of two hyperbolic periodic orbits in the RPC3BP}\label{subsection: hyperbolic}
We describe the periodic motions $\gm_1$ (resp. $\gm_2$) of the asteroid near the Lagrangian point $L_1$ (resp. $L_2$).
Here, it is more convenient for us to study the motion in $(x,y,\dot x,\dot y)$ Cartesian coordinates. In the rotating coordinates, the equations of motion are (c.f.\cite{7}):
\[\begin{cases}
\ddot{x}-2\dot y=\Omega_{x},\\
\ddot{y}+2\dot x=\Omega_y,\end{cases}
\mathrm{where}\ \Omega=\dfrac{x^2+y^2}{2}+\dfrac{1-\mu}{r_1}+\dfrac{\mu}{r_2}+\dfrac{\mu(1-\mu)}{2},\]
and $r_1,r_2$ are defined in Definition \ref{def: 2}.
It is easy to check that these equations have an integral---the Jacobi integral.
\begin{equation}J(x,y,\dot x,\dot y):=-(\dot x ^2+\dot y ^2)+2\Omega=-2h,\label{eq: Jacobi}
\end{equation} where $h$ is the energy of the RPC3BP.
The RPC3BP has 5 equilibria called Lagrangian points (see Figure 2). We want to pay attention to the collinear equilibria $L_1, L_2$ lying on the x-axis. The $L_1$ and $L_2$ Lagrangian points are the two positive critical points of the following expression that is the potential $\Om$ restricted on the $x$-axis (c.f. Chapter 2.5 of \cite{7}):
\begin{equation}\dfrac{x^2}{2}+\dfrac{1-\mu}{|x+\mu|}+\dfrac{\mu}{|x+\mu-1|}.\label{eq: l1l2}\end{equation}
The following lemma is a classical result following directly from \eqref{eq: l1l2}.
\begin{Lm}
In the RPC3BP, the distance from Jupiter to the Lagrangian points $L_1$ and $L_2$ is $\pm 3^{-1/3} \mu^{1/3}+o(\mu^{1/3})$ as $\mu\to 0$. We choose $+$ for $L_2$ and $-$ for $L_1.$\label{Lm: 1/3}
\end{Lm}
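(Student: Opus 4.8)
The plan is to analyze the critical point equation for the one-variable potential \eqref{eq: l1l2} near $x = 1-\mu$ (the position of Jupiter on the $x$-axis), looking for critical points at distance $\rho := x - (1-\mu)$ of order $\mu^{1/3}$. First I would differentiate \eqref{eq: l1l2} with respect to $x$ to obtain the collinear-equilibrium equation
\[
x - \frac{(1-\mu)(x+\mu)}{|x+\mu|^3} - \frac{\mu(x+\mu-1)}{|x+\mu-1|^3} = 0 ,
\]
which for $x$ slightly larger than $1-\mu$ (so that $x+\mu>0$ and $x+\mu-1 = \rho$) becomes
\[
x - \frac{1-\mu}{(x+\mu)^2} - \frac{\mu}{\rho^2}\,\mathrm{sgn}(\rho) = 0 .
\]
Substituting $x = 1-\mu+\rho$ and expanding $1/(x+\mu)^2 = 1/(1+\rho)^2 = 1 - 2\rho + O(\rho^2)$, the leading balance is between the $O(1)$ terms $x - (1-\mu)/(x+\mu)^2 \approx 1 - 1 + (1-\mu)\cdot 3\rho + \dots \approx 3\rho$ (using $x\approx 1$) and the singular term $\mu/\rho^2$; more carefully $1-\mu - (1-\mu) = 0$ kills the constant, so the residual is $\rho + 3(1-\mu)\rho + O(\rho^2) + \mu = \pm \mu/\rho^2$ to leading order, hence $3\rho \approx \pm\mu/\rho^2$, i.e. $\rho^3 \approx \pm \mu/3$. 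This gives $\rho = \pm 3^{-1/3}\mu^{1/3} + o(\mu^{1/3})$, with the sign of $\rho$ matching the sign chosen in $\mathrm{sgn}(\rho)$, so there is a root with $\rho > 0$ (this is $L_2$, on the far side of Jupiter from the Sun) and a root with $\rho < 0$ (this is $L_1$, between Sun and Jupiter).

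Next I would make this rigorous by the standard rescaling $\rho = \mu^{1/3}\xi$: the equilibrium equation, multiplied by $\rho^2$, becomes $\pm 1 + 3\xi^3 \mu^{1/3}\cdot(\text{units}) + (\text{higher order}) = 0$ — more precisely one writes $F(\xi,\mu^{1/3}) := 3\xi^3 \mp 1 + \mu^{1/3}(\dots) = 0$, a smooth function of $(\xi,\mu^{1/3})$ with $F(\pm 3^{-1/3},0) = 0$ and $\partial_\xi F(\pm 3^{-1/3},0) = 9\xi^2\big|_{\xi=\pm 3^{-1/3}} = 9\cdot 3^{-2/3} \neq 0$. The implicit function theorem then yields, for small $\mu$, a unique smooth branch $\xi(\mu^{1/3})$ with $\xi(0) = \pm 3^{-1/3}$, which is exactly the claimed asymptotics $x - (1-\mu) = \pm 3^{-1/3}\mu^{1/3} + o(\mu^{1/3})$. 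One should check that these are the only positive critical points near Jupiter and that they are genuinely the $L_1, L_2$ points by comparing with the known qualitative picture (the potential \eqref{eq: l1l2} $\to +\infty$ at $x = -\mu$ and $x = 1-\mu$ and at $x\to+\infty$, and has exactly two critical points with positive $x$ flanking $x = 1-\mu$, plus one with $x < 0$ which is $L_3$).

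I do not expect any serious obstacle here; this is a classical computation and the lemma is explicitly labelled "a classical result following directly from \eqref{eq: l1l2}." The only mild care needed is bookkeeping in the Taylor expansion — making sure that the $O(1)$ contributions from $x$ and from $(1-\mu)/(x+\mu)^2$ cancel to the right order so that the dominant balance is indeed $3\rho \sim \pm\mu/\rho^2$ rather than something involving a spurious constant — and handling the two sign cases ($\rho>0$ versus $\rho<0$) symmetrically, noting that the sign of $\mathrm{sgn}(\rho)$ in the singular term is what distinguishes $L_1$ from $L_2$. The implicit function theorem step then upgrades the formal expansion to a rigorous asymptotic statement with the stated $o(\mu^{1/3})$ error.
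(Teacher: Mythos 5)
Your proof takes essentially the same route as the paper's: differentiate \eqref{eq: l1l2}, substitute $x = 1-\mu+\rho$ (the paper writes the ansatz $x = 1-\mu + C\mu^\alpha$ and finds $\alpha = 1/3$ with $3C \mp C^{-2} = 0$), and read off the dominant balance $3\rho \approx \pm\mu/\rho^2$; your rescaling $\rho = \mu^{1/3}\xi$ plus the implicit function theorem is a clean way to rigorize the paper's informal ``plug in and match powers'' step. One small bookkeeping slip in your intermediate line: the linear-in-$\rho$ coefficient of $x - (1-\mu)/(x+\mu)^2$ after substituting $x = 1-\mu+\rho$ is $1 + 2(1-\mu) = 3 - 2\mu$ (the $\rho$ from $x$ together with $+2(1-\mu)\rho$ from $(1+\rho)^{-2} = 1 - 2\rho + O(\rho^2)$), not $\rho + 3(1-\mu)\rho$, and the stray $+\mu$ should not appear; fortunately the dominant balance $3\rho \approx \pm\mu/\rho^2$ that you go on to use is the correct one as $\mu\to 0$, so the conclusion stands.
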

\begin{proof}
We write ~(\ref{eq: l1l2}) as
\[\dfrac{x^2}{2}+\dfrac{1-\mu}{x+\mu}\pm \dfrac{\mu}{x+\mu-1},\]
where we choose $+$ for $L_2$ and $-$ for $L_1.$

Taking derivative and setting the derivative to be $0$, we get an equation
\[x-\dfrac{1-\mu}{(x+\mu)^2}\mp\dfrac{\mu}{(x-1+\mu)^2}=0.\]
As a first step approximation, we suppose $x=1-\mu+C \mu^\al$ for some undetermined constants $C$ and $\al$ and plug it into the equation to get that $\al=1/3$ and the coefficient for the leading term, i.e. the $\mu^{1/3}$ term
\[3C\mp C^{-2}=0.\]
Solving this we get the lemma.
\end{proof}
Since $L_1$ and $L_2$ are critical points of the potential~(\ref{eq: l1l2}), we can linearize the Hamiltonian system in a neighborhood of the two points.
\begin{Lm} $($Proposition 2 of \cite{4}$)$
The linearized systems of RPC3BP in a neighborhood of $L_1$ and $L_2$ have eigenvalues $\lambda_{1,2} =\pm \lambda, \lambda_{3,4} =\pm i\kappa$, the corresponding eigenvectors, $u_1, u_2, w_1, w_2$, and the general solution:\\
\begin{equation}
u(t)=\alpha_1 u_1 e^{\lambda t}+\alpha_2 u_2 e^{-\lambda t}+2Re(\beta e^{i\kappa t} w_1).
\label{eq: conley}
\end{equation}\label{Lm: ev}
\end{Lm}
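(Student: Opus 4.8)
The plan is to prove this by an explicit linearization, since it is the classical normal‑form computation at a collinear libration point. First I would introduce local coordinates $(\xi,\eta):=(x-x_{L_i},\,y)$ at $L_i$, $i=1,2$, and Taylor‑expand the rotating‑frame equations $\ddot x-2\dot y=\Omega_x$, $\ddot y+2\dot x=\Omega_y$ to first order. Since $L_i$ lies on the $x$‑axis and $\Omega$ is even in $y$, every term of $\partial_x\partial_y\Omega$ carries a factor $y$, so $\Omega_{xy}(L_i)=0$ and the linearized system decouples into
\[
\ddot\xi-2\dot\eta=a\,\xi,\qquad \ddot\eta+2\dot\xi=b\,\eta,
\]
with $a=\Omega_{xx}(L_i)=1+2c$ and $b=\Omega_{yy}(L_i)=1-c$, where $c:=\bigl((1-\mu)/r_1^3+\mu/r_2^3\bigr)\big|_{L_i}>0$; these constants come from differentiating $\Omega=\tfrac12(x^2+y^2)+(1-\mu)/r_1+\mu/r_2+\mathrm{const}$ twice and setting $y=0$.

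Next I would pass to the first‑order form on $(\xi,\eta,\dot\xi,\dot\eta)$ and compute the characteristic polynomial. Substituting $(\xi,\eta)\propto e^{\lambda t}$ into the decoupled system gives $(\lambda^2-a)\xi_0=2\lambda\eta_0$ and $(\lambda^2-b)\eta_0=-2\lambda\xi_0$, hence $(\lambda^2-a)(\lambda^2-b)+4\lambda^2=0$, i.e. with $s=\lambda^2$,
\[
s^2+(2-c)\,s+(1+c-2c^2)=0 .
\]
Its discriminant is $c(9c-8)$ and the product of its roots is $1+c-2c^2=-(2c+1)(c-1)$. Provided $c>1$ at the collinear points (see below), the discriminant is positive so both $s$‑roots are real, and their product is negative so there is exactly one positive root $s_+$ and one negative root $s_-$. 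Hence $\lambda^2=s_+$ yields a real pair $\lambda_{1,2}=\pm\lambda$ with $\lambda=\sqrt{s_+}$, and $\lambda^2=s_-$ yields a purely imaginary pair $\lambda_{3,4}=\pm i\kappa$ with $\kappa=\sqrt{-s_-}$. Solving $(\lambda^2-a)\xi_0=2\lambda\eta_0$ for each eigenvalue and appending the velocity components $(\dot\xi,\dot\eta)=\lambda(\xi,\eta)$ produces the eigenvectors: real $u_1,u_2$ for $\pm\lambda$ and a complex‑conjugate pair $w_1,\overline{w_1}=:w_2$ for $\pm i\kappa$. The four eigenvalues are distinct, so the system is diagonalizable and the general \emph{real} solution is the real part of an arbitrary complex combination of the eigensolutions, which is exactly $u(t)=\alpha_1 u_1 e^{\lambda t}+\alpha_2 u_2 e^{-\lambda t}+2\,\mathrm{Re}\bigl(\beta e^{i\kappa t}w_1\bigr)$ with $\alpha_1,\alpha_2\in\R$, $\beta\in\C$.

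The only input that is not pure bookkeeping is the inequality $c>1$ at $L_1$ and $L_2$ — equivalently, that these equilibria are of saddle--center type rather than center--center, which is also what makes the quartic split the way the lemma asserts. I would obtain it from the collinear equilibrium relation $\Omega_x(x_{L_i},0)=0$, which on the $x$‑axis reads $x_{L_i}=(1-\mu)/r_1^2\pm\mu/r_2^2$ with $r_1=x_{L_i}+\mu$, $r_2=|x_{L_i}+\mu-1|$ (the sign $+$ for $L_2$, $-$ for $L_1$); combined with $r_1=1+O(\mu^{1/3})$ and $r_2=3^{-1/3}\mu^{1/3}(1+o(1))$ from Lemma~\ref{Lm: 1/3}, this forces $\mu/r_2^3\to 3$, hence $c\to 4>1$ as $\mu\to 0$, and for general $\mu$ it is the classical fact recorded in Proposition~2 of \cite{4}. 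I expect this positivity statement to be the only place needing care; once it is in hand, the rest is the quadratic‑formula computation above.
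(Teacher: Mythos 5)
Your computation is correct and is the standard linearization at a collinear libration point; the paper itself does not prove this lemma but simply cites it as Proposition~2 of Conley's 1968 paper \cite{4}, so you have supplied the derivation that the paper outsources to the reference. The bookkeeping all checks out: $\Omega_{xx}=1+2c$, $\Omega_{yy}=1-c$, $\Omega_{xy}=0$ on the $x$-axis, the quartic $(\lambda^2-a)(\lambda^2-b)+4\lambda^2=0$ reducing to $s^2+(2-c)s+(1+c-2c^2)=0$ with discriminant $c(9c-8)$ and root product $-(2c+1)(c-1)$, and the conclusion that $c>1$ forces one positive and one negative root $s$, hence the eigenvalue split $\pm\lambda$, $\pm i\kappa$. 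The only place you leave as an external fact is $c>1$ for general $\mu$ at $L_1,L_2$; you verify it in the small-$\mu$ limit ($c\to 4$, consistent with Lemma~\ref{Lm: 1/3}), which suffices for the regime the paper actually uses, and you correctly flag the general-$\mu$ case as a classical result from \cite{4}. (For $L_1$ it is in fact immediate even without asymptotics: $r_1+r_2=1$ there, so $r_1,r_2<1$ and $c=(1-\mu)/r_1^3+\mu/r_2^3>(1-\mu)+\mu=1$; for $L_2$ it needs the equilibrium relation, which is where Conley's argument is genuinely used.) In short, your proposal is a correct self-contained proof of a result that the paper merely cites.
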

From this lemma we see that for the linearized system the two complex conjugate purely imaginary eigenvalues give rise to periodic orbits, and the two real eigenvalues give the stable and unstable directions. Next, an application of the Lyapunov center theorem (c.f. \cite{4} and \cite{8}, Theorem 9.2.1) shows that for energy levels slightly higher than the critical energy levels, i.e. the energy level that we obtained by setting $\dot x,\dot y=0$ and plug in the position of $L_1,L_2$ in \eqref{eq: Jacobi}.
\subsection{Heteroclinic intersections in the RPC3BP}\label{subsection: hetero}
We have shown the existence of two Lyapunov periodic orbits in RPC3BP. However, the existence of transversal intersection of their stable and unstable manifolds remains an open problem. In this section, we cite some relevant results in favor of our transversality assumption in Theorem \ref{Thm: main}.
\subsubsection{The rigorous numerical result for one special energy level and realistic mass ratio $\mu$}
In \cite{5}, the authors proved the following two theorems from \cite{5} using rigorous numerics. One of the motivations of the work \cite{5} was to justify the numerical evidences in \cite{12} showing transversal intersection of stable and unstable manifolds of the Lyapunov orbits.
\begin{Thm}
For RPC3BP with
$J= 3.03, \mu= 0.0009537$, there exist two periodic solutions in the Jupiter region, $\gamma_1$ and $\gamma_2$, called Lyapunov orbits, and
there exists heteroclinic connections between them, in both directions.
\end{Thm}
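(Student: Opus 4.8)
The statement is a fixed-parameter, fixed-energy assertion ($J=3.03$, $\mu=0.0009537$), so the natural route is a computer-assisted proof based on validated numerics: no continuation in $\mu$ or $J$ is needed, and the whole argument is a finite, if delicate, rigorous computation. The plan is to work on the three-dimensional energy surface $\{J(x,y,\dot x,\dot y)=3.03\}$ and fix a two-dimensional Poincar\'e section $\Sigma$ transverse to the flow in the Jupiter region (for instance $\{y=0\}$ together with a sign condition on $\dot y$), reducing the flow to a Poincar\'e map $P$ computed by a validated ODE integrator of Lohner/CAPD type that returns rigorous interval enclosures of the flow and of its variational equations.

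\emph{Step 1 (the two Lyapunov orbits).} By the Lyapunov center theorem (cf. Lemma \ref{Lm: ev} and \cite{8}, Theorem 9.2.1), for $J$ slightly below the critical value at $L_1$, resp.\ $L_2$, there is a one-parameter family of periodic orbits; at $J=3.03$ I would pin down $\gamma_1$ and $\gamma_2$ rigorously as fixed points (or low-period points) of $P$ by an interval Newton / Krawczyk argument: start from an approximate periodic orbit, enclose $P$ and $DP$ on a small box around the approximate intersection with $\Sigma$, and verify the contraction/inclusion condition that yields existence and local uniqueness inside the box. The same rigorous enclosure of $DP$ (the monodromy matrix restricted to $\Sigma$), combined with an interval eigenvalue bound, establishes hyperbolicity — one eigenvalue of modulus $>1$, one of modulus $<1$ — matching the $\lambda_{1,2}=\pm\lambda$ structure of Lemma \ref{Lm: ev}. \emph{Step 2 (local invariant manifolds).} Around each hyperbolic point $p_i=\gamma_i\cap\Sigma$ I would produce validated local arcs of $W^u(\gamma_1)$ and $W^s(\gamma_2)$, either via the parametrization method with rigorous $C^0$ and $C^1$ error bounds, or, more in the spirit of topological rigorous numerics, by enclosing these arcs in thin ``h-sets'' equipped with cone conditions that are forward- (resp.\ backward-) invariant under $P$.

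\emph{Step 3 (globalization and intersection).} Iterate $P$ on the local unstable arc of $\gamma_1$ forward and on the local stable arc of $\gamma_2$ backward until both reach a common auxiliary section, representing each globalized arc as a finite chain of h-sets and verifying a sequence of \emph{covering relations} (``topological stretching across'') along the chain. A final covering relation, in which the propagated unstable arc of $\gamma_1$ covers an h-set that is in turn covered by the propagated stable arc of $\gamma_2$, produces a heteroclinic point by the covering-relation intersection theorem; carrying cone conditions through the entire chain upgrades this to a \emph{transverse} heteroclinic connection. Swapping the roles of $\gamma_1$ and $\gamma_2$ and repeating gives the connection in the other direction, and concatenating the two yields the full heteroclinic cycle.

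\emph{Main obstacle.} The hard part is controlling the wrapping effect — the explosive growth of interval enclosures under long rigorous integration — which is particularly severe here because the orbits and the globalizing trajectories live in the Jupiter region and make close approaches to Jupiter, where the vector field is large and the flow is stiff. The remedies are to subdivide the manifold arcs into many short pieces (each its own h-set and covering relation), to choose the sections and local coordinates so that the covering-relation chains stay short, and, if a passage comes too close to Jupiter, to regularize the near-collision (Levi-Civita or Birkhoff coordinates) before integrating through it. Propagating the cone conditions cleanly through these near-collision segments, so that transversality survives rather than merely a topological crossing, is the most technical point of the argument.
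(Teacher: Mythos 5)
This theorem is not proved in the paper you were given: it is quoted verbatim from Wilczak--Zgliczynski \cite{5} as numerical evidence supporting the paper's transversality hypothesis, and the paper explicitly characterizes the result of \cite{5} as establishing only a \emph{topological} intersection of the invariant manifolds (see the first bullet of the Remark after Theorem \ref{Thm: main}). So there is no ``paper's own proof'' to compare against; the right comparison is with the methodology of the cited computer-assisted proof.

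With that understanding, your sketch is broadly on target. Working on the fixed energy surface, reducing to a Poincar\'e map computed by a Lohner-type validated integrator, pinning down the Lyapunov orbits by an interval Newton/Krawczyk argument, and propagating short arcs of the local invariant manifolds as chains of h-sets linked by covering relations is essentially the topological framework Wilczak and Zgliczynski use. Two caveats. First, you claim that carrying cone conditions through the chain upgrades the crossing to a \emph{transverse} heteroclinic connection; the statement you were asked to prove does not assert transversality, and, as the paper itself notes, the proof in \cite{5} produces topological (covering-relation) crossings rather than verified transversality, which is precisely why the present paper must impose transversality as a standing hypothesis. If you want to claim transversality you need to acknowledge that this is a strengthening of the cited theorem and requires additional cone-condition bookkeeping that \cite{5} did not carry out. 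Second, a central simplification in \cite{5} that you omit is the reversing symmetry $(x,y,\dot x,\dot y,t)\mapsto(x,-y,-\dot x,\dot y,-t)$ of the RPC3BP: the Lyapunov orbits are symmetric, so one only needs to propagate one manifold arc until it crosses the symmetry line appropriately, halving the length of the chain of covering relations and substantially taming the wrapping-effect problem you correctly identify as the main obstacle. Without exploiting this symmetry the globalization step would be considerably harder to validate.
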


\begin{Thm}
For RPC3BP with $J= 3.03, \mu= 0.0009537$, there exist a symbolic dynamics on four symbols
${S, X, L_1, L_2}$ corresponding to sun and exterior
regions and vicinity of $L_1$ and $L_2$, respectively.
\end{Thm}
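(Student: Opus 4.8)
The plan is to derive the four-symbol subshift from the transverse heteroclinic structure of the previous theorem by the method of \emph{covering relations} (correctly aligned windows) applied to a return map on the energy surface $\cM=\{J=3.03\}$, so that every hypothesis reduces to finitely many inequalities certified by rigorous interval arithmetic. For $J=3.03$ (which lies below both critical Jacobi values for $\mu=0.0009537$) the necks around $L_1$ and $L_2$ are open, so the Hill region on $\cM$ is connected and splits into the sun region, the Jupiter (equilibrium) region, and the exterior region, with the tubes $W^{u,s}(\gm_1),W^{u,s}(\gm_2)$ acting as the separatrices that govern transit between them. The four symbols $S,X,L_1,L_2$ record, respectively, a long sojourn in the sun region, a long sojourn in the exterior region, and a passage through a small neighbourhood of $\gm_1$, resp. $\gm_2$.

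First I would fix $\cM$ and reduce the flow to a Poincar\'e return map $P$ on a collection of two-dimensional sections: a section $\Sigma_S$ cutting the sun-region component, a section $\Sigma_X$ in the exterior region, and sections $\Sigma_1,\Sigma_2$ transverse to $\gm_1,\gm_2$. On $\cM$ each $\gm_i$ is a hyperbolic periodic orbit with one-dimensional stable and one-dimensional unstable directions, so near $\gm_i$ I would install a local hyperbolic normal form together with cone conditions (a ``local-passage lemma''), certifying that a window entering a small tubular neighbourhood of $\gm_i$ stretches across it in the unstable direction and contracts in the stable direction. Then, along each transverse heteroclinic orbit supplied by the previous theorem, and along the heteroclinics joining $S$ and $X$ to the neighbourhoods of $\gm_1,\gm_2$ (which are forced by the fact that any transit between the sun and exterior regions must cross both necks, i.e.\ pass along the tube boundaries), I would build a finite chain of h-sets $N_0\Longrightarrow N_1\Longrightarrow\cdots\Longrightarrow N_k$ and verify the covering relations $P(N_j)\Longrightarrow N_{j+1}$ by rigorous $C^0$--$C^1$ integration of the RPC3BP vector field together with its first variation, producing one ``transition block'' for each admissible ordered pair of symbols.

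Assembling the blocks into a directed transition graph $G$ on $\{S,X,L_1,L_2\}$, the shadowing theorem for covering relations (of topological-horseshoe type) then yields, for every bi-infinite admissible path in $G$, an orbit of $P$ on $\cM$, hence a trajectory of the RPC3BP, whose region itinerary is exactly that path; this is the asserted symbolic dynamics. The collision singularities and the boundary of the Hill region cause no trouble, since all windows are built in the interior of $\cM$ bounded away from the primaries and from $\pt(\text{Hill region})$.

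The main obstacle is the rigorous numerical step: the heteroclinic excursions are long, so the variational equations must be integrated over correspondingly long times with enclosures tight enough that the covering relations genuinely hold; the hardest part is the passage near $\gm_1,\gm_2$, where the flow slows to a near-halt and the windows must be propagated through the neighbourhood by the local hyperbolic normal form rather than by direct integration, while simultaneously certifying the cone and transversality conditions for $W^u(\gm_i)$ and $W^s(\gm_j)$. Once those finitely many inequalities are verified, the construction of $G$ and the shadowing argument are purely formal.
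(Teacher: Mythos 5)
This theorem is not proved in the paper at all: it is quoted as a computer-assisted result of Wilczak and Zgliczy\'nski, cited as reference~\cite{5}, and Section~\ref{subsection: hetero} presents it only as evidence supporting the transversality assumption in Theorem~\ref{Thm: main}, with no proof supplied or needed here. Your sketch is a faithful high-level description of the strategy actually used in~\cite{5} --- Poincar\'e sections, covering relations and h-sets with cone conditions, a local-passage lemma near the Lyapunov orbits, and rigorous interval/$C^1$ integration culminating in a subshift-of-finite-type shadowing theorem --- so it matches the cited source rather than anything in the present paper; the real content of such a result is precisely the certified numerics you defer to at the end, which this paper does not reproduce.
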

\subsubsection{Numerical evidence for the Hill problem and RPC3BP}
Now we cite the result from \cite{18}.
In \cite{18}, the authors study the Hill problem that is a simplification of the RPC3BP in the limit $\mu\to 0$ taken as follows. We consider the following (c.f.\cite{18}).
\begin{equation}\label{EqHill}x=X\mu^{1/3}+\mu-1,\quad \dot x=\dot X \mu^{1/3},\quad y=Y \mu^{1/3},\quad \dot y=\dot Y \mu^{1/3}. \end{equation}
The rescaling is natural in view of Lemma~\ref{Lm: 1/3}.
Then we express the Jacobi constant $J$ in terms of $(X,Y,\dot X,\dot Y)$, to obtain the Jacobi constant in the Hill's limit
\begin{equation}J_H(X,Y,\dot X,\dot Y)=-\dot X^2-\dot Y^2+\dfrac{2}{(X^2+Y^2)^{1/2}}+3X^2+O(\mu^{1/3}),\label{eq: hill}\end{equation}
where $J_H=\mu^{-2/3}(J-3(1-\mu))$ and $\mu^{2/3} J_H\to J-3$ as $\mu\to 0$.\\

%\begin{Lm} $($\cite{5}$)$
%For RCP3BP with $J = 3.03, \mu = 0.0009537$, there exist Lyapunov orbits $\gm_1 (J)$ and $\gm_2 (J)$ such that the manifolds $W^{s,u} (\gm_1 (J))$ and $W^{u,s} (\gm_2 (J))$ intersect transversally.  Moreover it follows the results holds on an open interval of energies and masses.
%\end{Lm}
% There is also numerical evidence in \cite{13} showing that the energy interval where transversality holds is wide.
Numerics in \cite{18} shows that

{\it for the Hill problem \eqref{eq: hill}, for $\dfrac{1}{2}|J_H|^{-3/2}> 1/18$ the invariant manifolds of the Lyapunov periodic orbits give rise to homoclinic and heteroclinic orbits, connecting a vicinity of $L_1$ to itself and to one of $L_2$ and vice versa. These results extend to the RPC3BP for $\mu$ sufficiently small because of transversality.}

If this result is established rigorously, our transversality assumption of Theorem \ref{Thm: main} would be satisfied for sufficiently small $\mu$. In $J_H$, we set $\dot X,\ \dot Y,\ \mu=0$ and find the two fixed points $L_1,L_2$ lie on the same energy level. So the diameters of the Lyapunov periodic orbits can be as small as we wish on the same energy level and for small $\mu$.
\section{Existence of heteroclinic cycle for four-body problem}\label{section: hyp}
In this section, we show that under the transversality assumption in Theorem \ref{Thm: main},  our four-body problem has the heteroclinic cycle required by the GTL mechanism. %According to the Hamiltonian $H_{A,rot}$ ~(\ref{eq: main}), our four-body problem is a perturbation to the RPC3BP (S-J-A). The RPC3BP (S-J-A) and P4BP (S-J-P-A) have different degrees of freedom and phase spaces. So first we use the classical hyperbolicity theory to show the persistence of the two normally hyperbolic invariant cylinders and persistence of heteroclinic intersections under small perturbation. %To this end, we need to extend the phase space of the RPC3BP to that of the P4BP.
%Second, we show that after freezing the P4BP the perturbation does not destroy the periodic orbits within the two cylinders.
\subsection{Translation of Section~\ref{section: SJA} into the language of classical hyperbolicity theory.}\label{subsection: language}
Recall that the RPC3BP has two degrees of freedom and its phase space is $T^*\T^2$.  Denote the flow of the RCP3BP by $\Phi_t$.  Fixing the energy restricts the dynamics to a 3 dimensional energy surface denoted by $M (h), h \in[h_- , h_+ ]$. (In the following sections, we use $h$ instead of the Jacobi constant $J$ to stand for the energy to be consistent with the main result, $J=-2h$.) On every energy surface $M(h)$, there are 2 hyperbolic periodic orbits $\gm_1 (h), \gm_2 (h)$ (see Theorem ~\ref{Thm: con}).
\begin{Def}
Define the cylinder $C_{i}=\bigcup_{h} \gm_{i}(h)$, $i=1,2$. These cylinders are two dimensional, and each is diffeomorphism to $[h_-, h_+]\times \T^1$. Additionally their stable and unstable manifolds, denoted $W^{s} (C_i )$ and $W^{u}(C_i )$ respectively are both three dimensional.
The heteroclinic intersections of these manifolds  \[\Gamma_{12}=\bigcup_h \Gamma_{12}(h)\subset W^{s}(C_{1})\cap W^{u}(C_{2}),\quad \Gamma_{21}=\bigcup_h \Gamma_{21}(h)\subset W^{u}(C_{1})\cap W^{s}(C_{2})\] are two dimensional.
\end{Def}
The following lemma is a translation of Section~\ref{section: SJA} into the language of classical hyperbolicity theory.
\begin{Lm}$($Lemma 3.2 of \cite{1}$)$
%\ \ \ \ The canonical symplectic form $\Omega$ on $T^{*}\T^{2}$ restricted on $C_i$ is non-degenerate. The form $\Omega|_{C_i}$ is invariant under the flow $\Phi_{t}$. \\
For some constant $k$, $\lambda>0$ $($calculated at the end of Section 4.1 $)$, and for all $x\in \gm_i(h)$, we have the decomposition of the tangent space into stable, unstable, and central subspaces.
\[T_{x}M(h)=E_{x}^{s}\oplus E_{x}^{u}\oplus T_{x}\gm_i(h)\]
with \[ \Vert D\Phi_{t}(x)|_{E_{x}^{s}} \Vert\leq k e^{-\lambda t},\ \textrm{for}\ t>0,\]
\[\Vert D\Phi_{t}(x)|_{E_{x}^{u}} \Vert\leq k e^{\lambda t},\ \textrm{for}\ t<0,\]
\[\Vert D\Phi_{t}(x)|_{T_{x}\gm_i(h)} \Vert\leq k,\ \textrm{for}\ t\in \R.\] The stable and unstable manifolds to $C_{i}$: $W^{s(u)}(C_{i})$, are three dimensional manifolds
diffeomorphic to $[h_- , h_+] \times \T^{1} \times \R$, and their heteroclinic intersections $\Gamma_{12}$ and $\Gamma_{21}$ are both diffeomorphic to $[h_{-} , h_{+}] \times \R$.
\end{Lm}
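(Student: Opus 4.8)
The plan is to recognize this lemma as the assertion that the cylinders $C_i=\bigcup_h\gm_i(h)$ are \emph{normally hyperbolic invariant manifolds} (with boundary) for the RPC3BP flow $\Phi_t$ on $T^*\T^2$, and then to read off the stated structure of $W^{s,u}(C_i)$ and of $\Gamma_{12},\Gamma_{21}$ from the invariant manifold theory for normally hyperbolic sets, combined with the transversality hypothesis of Theorem \ref{Thm: main}. The three displayed exponential estimates are just the defining estimates of normal hyperbolicity, so the content is: (i) the $\gm_i(h)$ are hyperbolic periodic orbits inside their energy levels, uniformly in $h$; (ii) the union over $h$ assembles into a normally hyperbolic cylinder; (iii) Fenichel theory then produces $W^{s,u}(C_i)$ with the claimed product structure; (iv) transversality upgrades level-by-level to the claimed structure of $\Gamma_{12},\Gamma_{21}$.

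First I would establish that each Lyapunov orbit $\gm_i(h)$ is a hyperbolic periodic orbit of $\Phi_t$ restricted to the three-dimensional energy surface $M(h)$. This is where Lemma \ref{Lm: ev} enters: at $L_i$ the linearization has one pair of real eigenvalues $\pm\lb$ and one pair of imaginary eigenvalues $\pm i\kappa$; the Lyapunov center theorem produces the family $\gm_i(h)$ tangent to the center eigenspace, and for small-amplitude orbits (which is exactly the regime of Theorem \ref{Thm: main}, the diameters being ``sufficiently small'') the two nontrivial Floquet exponents of $\gm_i(h)$ within $M(h)$ are close to $\pm\lb T_i(h)$, hence off the unit circle; this is the standard fact that small Lyapunov orbits around a saddle--center are hyperbolic. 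Since $h$ ranges over the compact interval $[h_-,h_+]$ and the Floquet data depend continuously on $h$, the constants $k,\lb$ in the statement can be chosen uniform in $h$ (the explicit values being computed later in Section~4.1). This yields the splitting $T_xM(h)=E^s_x\oplus E^u_x\oplus T_x\gm_i(h)$ with $E^s_x,E^u_x$ one-dimensional and the three exponential estimates.

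Next I would check that $C_i$ is normally hyperbolic in the full four-dimensional phase space. It is flow-invariant because each $\gm_i(h)$ is invariant and $\Phi_t$ preserves energy. Its tangent space at $x\in\gm_i(h)$ is two-dimensional, spanned by the flow direction $T_x\gm_i(h)$ and the direction in which $h$ varies; on $T_xC_i$ the cocycle $D\Phi_t$ stays bounded (periodicity of $\gm_i(h)$, energy preservation), while on $E^s_x$ resp.\ $E^u_x$ it contracts resp.\ expands at rate $\lb$. Hence $C_i$ is $r$-normally hyperbolic for every $r$, and the stable/unstable manifold theorem for normally hyperbolic manifolds (Fenichel, Hirsch--Pugh--Shub) gives $W^{s}(C_i)$ and $W^{u}(C_i)$ as smooth manifolds fibered over $C_i$ with one-dimensional fibers. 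Since $C_i\cong[h_-,h_+]\times\T^1$, a dimension count gives $\dim W^{s,u}(C_i)=3$ and the fibration yields $W^{s,u}(C_i)\cong[h_-,h_+]\times\T^1\times\R$. Because energy is conserved, $W^{s}(C_i)\cap M(h)=W^{s}(\gm_i(h))$, and likewise for $W^u$. Finally, within each $M(h)$ the submanifolds $W^{s}(\gm_1(h))$ and $W^{u}(\gm_2(h))$ are two-dimensional in the three-dimensional $M(h)$, and the transversality hypothesis of Theorem \ref{Thm: main} makes their intersection $\Gamma_{12}(h)$ one-dimensional, i.e.\ (a branch of) a heteroclinic orbit; taking the union over $h$ and using the smooth $h$-dependence inherited from the fibered structure gives $\Gamma_{12}=\bigcup_h\Gamma_{12}(h)\cong[h_-,h_+]\times\R$, and similarly $\Gamma_{21}\cong[h_-,h_+]\times\R$.

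The one delicate point I anticipate is the boundary of the cylinder: $C_i$ is a manifold with boundary (at $h=h_\pm$), so Fenichel theory there concerns overflowing/inflowing invariant manifolds rather than compact invariant ones. This is purely technical: one extends the Lyapunov family slightly past $h_\pm$ (the eigenvalue picture of Lemma \ref{Lm: ev} persists on a slightly larger energy interval), applies the compact-manifold version there, and then restricts back to $[h_-,h_+]$, so all conclusions hold on the closed interval as stated. The remaining ingredient, uniformity of $k$ and $\lb$ over the compact $h$-interval, is routine by compactness and continuity of the Floquet data.
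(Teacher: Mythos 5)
The paper offers no proof of this lemma: it is stated as a citation to Lemma 3.2 of \cite{1} and serves purely as a ``translation'' of the earlier constructions into the language of normally hyperbolic invariant manifolds. Your reconstruction supplies the argument that \cite{1} would give and is essentially correct: hyperbolicity of the small-amplitude Lyapunov orbits from the saddle--center eigenvalue structure of Lemma \ref{Lm: ev}, uniformity of the hyperbolicity constants over the compact energy interval, assembly of the one-parameter family into a normally hyperbolic cylinder $C_i$, Fenichel/Hirsch--Pugh--Shub theory for $W^{s,u}(C_i)$ with the stated product structure, energy-level-by-energy-level transversality for $\Gamma_{12},\Gamma_{21}$, and the standard overflowing-manifold trick to handle the boundary at $h_\pm$.

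One small imprecision: you assert that $D\Phi_t$ ``stays bounded'' on all of $T_xC_i$. Along the flow direction $T_x\gamma_i(h)$ this is true by periodicity, and this is all the lemma claims. But along the complementary direction in $T_xC_i$ (the direction in which $h$ varies) the cocycle can grow polynomially in $t$, since nearby orbits in the cylinder at different energies generically have different periods and shear apart linearly. This does not harm the conclusion---polynomial growth in the center directions is still dominated by the exponential rates in $E^{s,u}$, so $C_i$ remains $r$-normally hyperbolic for every $r$---but the justification should invoke the gap between polynomial and exponential rates rather than boundedness on the whole tangent plane to the cylinder.
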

\subsection{Add the perturbation to the RPC3BP $($S-J-A$)$}\label{subsection: perturb}
When the perturbation is added, we start to consider the R4BP whose Hamiltonian is ~(\ref{eq: main}).
The system as stated is non-autonomous. To apply the mechanism of \cite{2}, we consider the frozen system with $\eps t=\nu,\ \nu\in \T^2$.  Note the frozen system is autonomous and its Hamiltonian can be written in the form:
 \begin{equation}H_{A,rot}(\ell_{A},L_{A},g_{A},G_{A},\nu)=-\dfrac{1}{2L^{2}_{A}}-G_{A}+\Delta H+f(\ell_{A},L_{A},g_{A},G_{A},\nu).\label{eq: frozen}\end{equation}
Now we are working with an autonomous system with 2 degrees of freedom. For each fixed $\nu\in \T^2$, using the hyperbolicity theory we have the following lemma. It shows that when $\dt \neq 0$ the cylinders along with their stable and unstable manifolds  and heteroclinic intersections persist.
\begin{Lm}
Consider the frozen system \eqref{eq: frozen} with $\nu\in \T^2$ fixed, $H_{A,rot}\in C^{r}, 2\leq r<\infty$. Then there exists a $\dt_{\nu}$, such that for $|\dt|<\dt_{\nu}$, the perturbed cylinder $C_{i,\dt}$ is hyperbolic, $C^{r-1}$ diffeomorphic to $C_i$, locally invariant and is $\dt$-close to the unperturbed cylinder $C_i$.  Its (un)stable manifolds $W^{s(u)}(C_{\dt,i})$ are also $\dt$-close to the unperturbed $W^{s(u)}(C_i)$ in the $C^{r-2}$ sense. (i=1,2). Moreover, the compactness of $\T^2(\ni \nu)$ gives us a uniform $\dt^*>0$ such that the above statement holds for all $\nu\in\T^2$ and $|\dt|< \dt^*$.\\
\end{Lm}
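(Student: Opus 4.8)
The plan is to recognize this lemma as a standard persistence statement for normally hyperbolic invariant manifolds (NHIMs), in the spirit of Fenichel's and the Hirsch--Pugh--Shub theorems; the unperturbed objects $C_i$, $W^{s(u)}(C_i)$ together with their hyperbolicity rates are exactly the data furnished by the preceding lemma (Lemma 3.2 of \cite{1}). At $\dt=0$ the frozen Hamiltonian \eqref{eq: frozen} is precisely the RPC3BP --- the unperturbed part $-\frac{1}{2L_A^2}-G_A+\Delta H$ carries no $\nu$-dependence, and $f\to 0$ as $\dt\to 0$ --- so each cylinder $C_i$ is a two-dimensional locally invariant manifold, normally hyperbolic with rates $k,\lb$ that are \emph{uniform} over the compact interval $[h_-,h_+]$, and $W^{s(u)}(C_i)$ are its three-dimensional local (un)stable manifolds. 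I would first record the estimate on the perturbation itself: on a fixed neighbourhood $U$ of $C_i\cup W^{s}_{\mathrm{loc}}(C_i)\cup W^{u}_{\mathrm{loc}}(C_i)$, all the denominators $r_1,r_2,r_{AP}$ stay bounded away from $0$ (the cylinders sit near $L_i$, which is bounded away from the Sun and from Jupiter, and the planet is at distance of order $\eps^{-2/3}$), while by Lemma \ref{LmEst} the positions of the Sun and of Jupiter in the rotating frame converge smoothly to $(-\mu,0)$ and $(1-\mu,0)$, so that $r_{AS}\to r_1$ and $r_{AJ}\to r_2$ in $C^r(U)$, $\dot\tt=O(\dt)$, and $\dt/r_{AP}=O(\dt\eps^{2/3})$. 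Hence the perturbation of the vector field on $U$ has $C^r$-norm that depends on $\nu$ but tends to $0$ as $\dt\to 0$.

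The only genuine technical point is that $C_i$ is a manifold \emph{with boundary} (diffeomorphic to $[h_-,h_+]\times\T^1$), so the off-the-shelf NHIM theorems do not apply directly. I would remedy this by enlarging the cylinder: since the Lyapunov families $\gm_i(h)$ persist and stay uniformly hyperbolic on the larger open interval $(\hat h_-,\hat h_+)$, choose $\rho>0$ with $[h_--\rho,h_++\rho]$ a compact subset of $(\hat h_-,\hat h_+)$ and put $\widetilde C_i=\bigcup_{h\in[h_--\rho,h_++\rho]}\gm_i(h)$; being foliated by the conserved energy, $\widetilde C_i$ is invariant (not merely locally invariant) for the unperturbed flow. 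I then cut off the perturbation --- multiply $f$ by a smooth bump equal to $1$ on a neighbourhood of $C_i$ and supported over $[h_--\rho/2,h_++\rho/2]$ --- and, if desired, add a small outward drift supported near $\pt\widetilde C_i$ that leaves the normal rates untouched, so that $\widetilde C_i$ becomes an overflowing-invariant NHIM of a globally defined $C^r$ vector field that is $C^r$-close to the RPC3BP. The persistence theorem then produces, for $|\dt|<\dt_\nu$, a $C^{r-1}$ manifold $\widetilde C_{i,\dt}$ that is $C^{r-1}$-diffeomorphic and $\dt$-close to $\widetilde C_i$, with local stable and unstable manifolds $\dt$-close to the unperturbed ones in $C^{r-2}$; since the cutoff is trivial over $[h_-,h_+]$, restricting these objects to that sub-cylinder gives the locally invariant $C_{i,\dt}$ and $W^{s(u)}(C_{\dt,i})$ of the statement. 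The one-derivative loss for the manifold and the two-derivative loss for its invariant manifolds are the usual ones in Fenichel theory: here the relevant spectral-gap ratio is infinite (bounded central rate against exponential normal rates), so every finite $2\le r<\infty$ is admissible.

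For the uniformity in $\nu$ I would use that the unperturbed NHIM and its constants $k,\lb$ do not depend on $\nu$ at all, so the only source of $\nu$-dependence is $f$; the $C^r$-norm of $f(\cdot,\nu)$ on $U$, and the rate at which it decays as $\dt\to 0$, depend continuously on $\nu\in\T^2$, and $\T^2$ is compact, so the thresholds $\dt_\nu$ are bounded below by a common $\dt^*>0$. I expect the main obstacle to be exactly the boundary bookkeeping of the previous paragraph --- arranging the cutoff and the overflowing-invariance modification so that $\widetilde C_i$ is a genuine NHIM of a globally defined flow while the perturbation stays small and the dynamics over $[h_-,h_+]$ is left untouched --- together with the routine but unavoidable verification that $f(\cdot,\nu)$ converges to $0$ in $C^r$, not merely in $C^0$, on a full neighbourhood of the cylinders and their local invariant manifolds.
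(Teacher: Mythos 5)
Your proposal is correct and matches the paper's intent: the paper's own proof consists entirely of a citation to the persistence theory of normally hyperbolic invariant manifolds (Theorem 4.2 and Theorems A.12, A.14 of Delshams--de la Llave--Seara \cite{1}), which is exactly the Fenichel/Hirsch--Pugh--Shub machinery you invoke and unpack. You go beyond the paper by actually addressing the boundary/overflowing-invariance bookkeeping and the $C^r$ (not just $C^0$) smallness of $f$, neither of which the paper discusses; the only blemish is the bound $\dot\theta=O(\dt)$, which from Lemma \ref{LmEst}(c) and the estimate $q_2\times p_2=O(\dt\eps^{-1/3})$ should read $\dot\theta=O(\dt\eps^{-1/3})$, still $o(1)$ under $\dt=O(\eps^3)$ and hence harmless.
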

\begin{proof} Direct application of classical hyperbolicity theory, c.f. Theorem 4.2, and theorem A.14, A.12 of \cite{1}.\end{proof}
\begin{Rk}
Since we have the transversal heteroclinic intersections $\Gamma_{12}\subset W^{s}(C_{1}) \cap W^{u}(C_{2})$ and  $\Gamma_{21} \subset W^{u}(C_{1})\cap W^{s}(C_{2})$, there exist locally unique new heteroclinic intersections:
$ \Gamma_{\dt,12}\subset W^{s}(C_{\dt,1}) \cap W^{u}(C_{\dt,2})$ and  $\Gamma_{\dt,21} \subset W^{u}(C_{\dt,1})\cap W^{s}(C_{\dt,2})$.
$\Gamma_{\dt,12}$ $(\Gamma_{\dt,21})$ is $\dt$-close to $\Gamma_{12}$  $(\Gamma_{21})$ in the $C^{r-2}$ sense, and that $ \Gamma_{\dt,12}$ $(  \Gamma_{\dt,21})$ can be parameterized by a $C^{r-1}$ function on $ \Gamma_{12}$  $(\Gamma_{21})$ to the extended phase space.
\end{Rk}
We still need to show the Lyapunov orbits cannot be broken by the perturbation $f(\nu)$, so that on each energy level of the frozen system, we still have two periodic orbits.
\begin{Lm}
For the frozen system \eqref{eq: frozen} for each $\nu\in \T^2$, each perturbed cylinder is a foliation of periodic orbits.
\end{Lm}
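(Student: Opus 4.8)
The plan is to produce a smooth first integral on each perturbed cylinder whose regular level sets are exactly the periodic orbits we want, namely the restriction of the conserved frozen energy. Fix $\nu\in\T^2$ and write $H:=H_{A,rot}(\cdot,\nu)=H^{(0)}+f$, where $H^{(0)}=-\frac{1}{2L_A^2}-G_A+\Delta H$ is the RPC3BP Hamiltonian and, by Theorem~\ref{Thm: ham}, $f\to 0$ as $\dt\to 0$ in the $C^r$ sense ($r\ge 2$). On the unperturbed cylinder $C_i=\bigcup_h\gm_i(h)$ I would use the global chart $(h,t)\in[h_-,h_+]\times\T^1$ in which $t$ is the (period-normalized) flow-time along $\gm_i(h)$; since $H^{(0)}$ is constant along each orbit, in this chart $H^{(0)}|_{C_i}$ is just the projection $(h,t)\mapsto h$. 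In particular $\partial_h\bigl(H^{(0)}|_{C_i}\bigr)\equiv 1$, and the level sets of $H^{(0)}|_{C_i}$ are precisely the circles $\gm_i(h)$.

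First I would transfer this picture to $C_{i,\dt}$. By the preceding persistence lemma there is a $C^{r-1}$-diffeomorphism $\psi\colon C_i\to C_{i,\dt}$ that is $\dt$-close to the inclusion, so $H\circ\psi$ is $C^1$-close to the projection $(h,t)\mapsto h$ once $\dt$ is small — and uniformly in $\nu$, by the compactness of $\T^2$ already invoked above. Hence $\partial_h(H\circ\psi)$ stays close to $1$, in particular positive, so for every value $c$ in the interior of the energy interval the set $\{H\circ\psi=c\}$ is a graph $\{h=\eta_c(t)\}$ over the $\T^1$-factor, i.e.\ a single embedded circle; pushing it forward by $\psi$ yields a circle $\tilde\gm_i(c)\subset C_{i,\dt}$, and as $c$ varies these circles partition $C_{i,\dt}$ (minus a collar of the boundary).

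Then I would check that each $\tilde\gm_i(c)$ is a periodic orbit. It is flow-invariant, because $C_{i,\dt}$ is invariant and $H$ is conserved, so the vector field $X_H$ is tangent to $\tilde\gm_i(c)$. The flow has no rest points on $C_{i,\dt}$: the only equilibria of the RPC3BP are the Lagrangian points, which lie strictly inside the Lyapunov orbits at energies below $[h_-,h_+]$, hence off $C_i$, and by $C^0$-closeness they stay off $C_{i,\dt}$ for $\dt$ small. A fixed-point-free flow on a circle is conjugate to a rigid rotation, so $\tilde\gm_i(c)$ is a single periodic orbit, and these orbits give the asserted foliation of $C_{i,\dt}$.

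I expect the delicate step to be the first one: everything hinges on $f$ being small not merely in $C^0$ but in $C^1$ (indeed in $C^r$), so that $\partial_h(H\circ\psi)$ is genuinely bounded away from zero on the compact cylinder; this is exactly where the smoothness of the normally hyperbolic persistence and the $C^r$-smallness of $f$ from Theorem~\ref{Thm: ham} get used together. A minor extra caution is needed near the endpoints $h=h_\pm$, where a level set could a priori be an arc meeting $\partial C_{i,\dt}$ rather than a circle; restricting to a slightly smaller closed energy subinterval — which is harmless for the diffusion construction — removes this issue.
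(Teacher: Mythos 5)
Your argument is correct, but it takes a genuinely different route from the paper's. The paper re-runs the Lyapunov center theorem: it treats $f(\nu)$ together with the nonlinear part of the RPC3BP expanded at $L_1$ (resp.\ $L_2$) as a perturbation of the linearized system of Lemma~\ref{Lm: ev}; the hyperbolic--elliptic spectrum and nonresonance persist, so the perturbed autonomous frozen system still has a one-parameter Lyapunov family of periodic orbits near each perturbed collinear equilibrium, and that family is identified with $C_{i,\dt}$. You instead take the persistence of the normally hyperbolic cylinder (established in the preceding lemma) as the starting point and extract the periodic foliation purely from conservation of the frozen energy: $H$ restricted to $C_{i,\dt}$ is a submersion because $\partial_h(H\circ\psi)$ stays near $1$, so each interior level set is an embedded circle; $X_H$ is tangent to that circle since it is tangent to both $C_{i,\dt}$ (local invariance) and $\{H=c\}$; and a nonvanishing vector field on a circle gives a single periodic orbit. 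Your route avoids relocating the perturbed equilibrium and rechecking Lyapunov nonresonance, and it makes explicit that the foliation is by level sets of the conserved energy, which is exactly what is used later when the averages $\bar f_i(h,\nu)$ are formed. Its price, which you correctly flag, is that you need $\psi$ to be $C^1$-close to the inclusion and $f$ to be $C^1$-small; Theorem~\ref{Thm: ham} as stated records only $C^0$-smallness of $f$, but the $C^1$ estimates do hold and the persistence lemma supplies a $C^{r-1}$ diffeomorphism with $r\geq 2$, so this is a matter of bookkeeping rather than a genuine gap. Your boundary caveat and fix (shrinking the energy interval slightly) are standard and harmless.
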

\begin{proof} Because our system is frozen, for fixed $\nu$, it is a system of 2 degrees of freedom. As a matter of fact, the persistence of these perturbed periodic orbits can be established from the Lyapunov center theorem (\cite{8}, Theorem 9.2.1) in the same way as the existence of Lyapunov orbits. The Lyapunov orbits established in Section \ref{section: SJA} is a perturbative result from the linearized system Lemma~\ref{Lm: ev}. If we treat the $f(\nu)$ and the nonlinear part of the RPC3BP expanded at $L_1$ or $L_2$ Lagrangian points as a whole, to perturb the linearized system, we still have a family of Lyapunov periodic orbits due to the same Lyapunov center theorem used by \cite{4} and \cite{8} provided $\dt$ is small. Since we have $\nu\in \T^2$, the compactness of $\T^2$ gives a uniform $\dt$ which works for all $\nu$'s. \end{proof}
\section{Apply the GTL Mechanism to the four-body Problem}\label{section: nondegenerate}
\subsection{Verification of the uniformity assumptions in \cite{2}}\label{subsection: uniformity}
Notice the P4BP under consideration satisfies the GTL mechanism exactly.  Indeed we have the existence of two normally hyperbolic periodic orbits and their transversal heteroclinic intersections on every energy surface of the frozen system in the energy interval $[h_-,h_+]$. In order to apply the result of \cite{2}, we need to check the uniformity assumption [UA1], [UA2] in \cite{2}. However, we do not cite their lengthy formulations here. [UA1] is the hyperbolicity requirement, which is given by Conley's result in Section 4.1, while  [UA2] is trivially satisfied, since we only consider finite energy interval which has compactness.
The remaining thing to do is to check the nondegeneracy condition. 
 \subsection{Verification of nondegeneracy}\label{subsection: nondeg}
We break the proof into several steps. First we write the nondegeneracy condition into a form that we are able to check. Then we show the resulting form depends analytically on the variables. Finally, we use the property of analytic functions to show the nondegeneracy holds.
 \subsubsection{Write integrals responsible for non-degeneracy}
In the Hamiltonian~(\ref{eq: main}), we set $\eps t=\nu$.
According to Theorem~\ref{Thm: GT}, we have \begin{equation}\dfrac{dh}{d\nu}=\max\left\{\dfrac{\pt\bar{f}_1(h,\nu)}{\pt\nu},\dfrac{\pt\bar{f}_2(h,\nu)}{\pt\nu}\right\}-\sigma \bt(h,\nu)\label{eq: 2}\end{equation}
where \begin{equation}\bar{f}_{i}=\dfrac{1}{T_{i}}\int_{0}^{T_{i}}f|_{\gm_i}(l_{A},L_{A},g_{A},G_{A},\nu)dt,\label{eq: integralf}\end{equation}
and $T_{i}\ (i = 1, 2)$ is the period of $\gm_i$. The $\sigma$ here is the $\dt$ in \cite{2}, which is just a small number. Once the $\dt$ in this paper is chosen, we can make $\sigma$ as small as we wish to suppress $\beta$.

Now, integrate both sides.
\begin{equation}h(t_1)-h(0)=\dfrac{1}{2}(\bar{f}_1(t_1)+\bar{f}_2(t_1)-\bar{f}_1(0)-\bar{f}_2(0))+\int^{t_1}_0 \left|\dfrac{d}{d\nu}(\bar{f}_1-\bar{f}_2)\right|d\nu-2\sigma\int \bt d\nu.\label{eq: 3}\end{equation}
We already bound $|\bar f_1|$ and $|\bar f_2|$ by $o(1)$ in Theorem \ref{Thm: ham}. To get linear energy growth, we only need to ensure the following non-degeneracy condition (Theorem 4 in \cite{2}):
 \[\displaystyle \liminf_{t_1\to\infty} \dfrac{1}{t_1}\int ^{t_1}_0 \left|\dfrac{d}{d\nu}(\bar{f}_1(\nu)-\bar{f}_2(\nu))\right|d\nu >0.\]
In our case, the perturbation $f$ is quasi-periodic. So using Birkhoff ergodic theorem, it is sufficient to satisfy:
\begin{equation}\label{eq: f1f2}
\bar{f}_1(\nu)-\bar{f}_2(\nu)\neq \mathrm{const}.\end{equation}
This is also the equation (69) in \cite{2}.

\subsubsection{ The proof of the nondegeneracy condition \eqref{eq: f1f2}.}
\begin{Lm}\label{Lm: nondeg} Consider $\bar f_1$ and $\bar f_2$ defined in~(\ref{eq: integralf}). Assume that $\mu$ sufficiently small and the diameters of the two Lyapunov periodic orbits $\gm_1,\gm_2$ are also sufficiently small.  Then we have for $\dt,\eps$ sufficiently small and satisfying $\dt=O(\eps^{3})$, and the eccentricity of the planet $e_P\in (0,1/2)$  that
\[\bar{f}_1(\nu)-\bar{f}_2(\nu)\neq  \mathrm{const}.\]
\end{Lm}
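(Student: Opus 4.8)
The plan is to evaluate $\bar f_1-\bar f_2$ in the regime where $\mu$ and the diameters of $\gm_1,\gm_2$ are small, so that each $\gm_i$ collapses onto the relative equilibrium $L_i$ of the RPC3BP(S-J-A) and $\bar f_i(\nu)\to f|_{L_i}(\nu)$, the value of $f$ with the asteroid placed at $L_i$ (with $p_A$ the momentum making $L_i$ a relative equilibrium, so that $q_A\times p_A|_{L_i}=r_{L_i}^2$, the squared distance from $L_i$ to the origin). I would first make this reduction quantitative, controlling the replacement of $\gm_i$ by $L_i$ by the diameter together with uniform bounds on $\nabla f$ near the orbits, and — as in the proof of Lemma \ref{Lm: barrar}(c) — controlling the replacement of the true S-J-P background by the unperturbed Keplerian ellipse of the planet over one averaging window. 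In the difference $\bar f_1-\bar f_2$ every $i$-independent part of $f$ cancels: the monopole $-\dt/|q_P|$ of the planet's potential and the parts of $-\dot\theta\,q_A\times p_A$ and of the primary terms that do not see the asteroid's position. What remains, to leading order in $\mu$ and in the diameter, is a combination of (i) $-\dot\theta(\nu)\,(r_{L_1}^2-r_{L_2}^2)$; (ii) the difference of the dipole and quadrupole terms of $-\dt/r_{AP}$ evaluated at $L_1$ versus $L_2$; and (iii) ``displacement'' terms measuring how far the Sun and Jupiter sit from $(-\mu,0)$ and $(1-\mu,0)$ in the chosen rotating frame. Here $r_{L_1}^2-r_{L_2}^2$ (equivalently $|L_1|^2-|L_2|^2$) and $L_1-L_2$ are nonzero of size $O(\mu^{1/3})$ by Lemma \ref{Lm: 1/3}, $\dot\theta(\nu)$ is the quantity of Lemma \ref{LmEst}(c), and $q_P(\nu)$ is the planet's position as a function of the frozen phase $\nu=(\nu_1,\nu_2)$. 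By rotational invariance of the gravitational interaction the result does not depend on the slow angle $\nu_2$, so it suffices to prove it is non-constant in the fast angle $\nu_1$, which carries the planet along its orbit and the Sun and Jupiter through their libration.

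I would then argue that $\bar f_1-\bar f_2$ depends analytically on the planet's eccentricity $e_P$ away from collisions — which do not occur, as $|q_A|=O(1)$ while $|q_P|=O(\eps^{-2/3})$ — so that each Fourier coefficient $c_j(e_P)$ of $\bar f_1(\nu_1)-\bar f_2(\nu_1)$ in $\nu_1$ is analytic in $e_P$, and it is enough to produce one coefficient $c_j$ with $c_j(e_P)\not\equiv 0$. Writing this coefficient as an integral over $\nu_1$ — equivalently over the planet's eccentric anomaly $u$, with the fast rotation of the perihelion $g_P$ averaged out by the Riemann-Lebesgue lemma exactly as in the proof of Lemma \ref{Lm: barrar}(c) — one finds that the surviving contributions of groups (ii) and (iii) feature the factor $(1-e_P\cos u)^{-k}$ coming from $|q_P|$, whose oscillating coefficients are $O(e_P)$, while the $\dot\theta$ contribution is governed by the $\mu$-induced oscillation of the planet's angular momentum, again computable à la Lemma \ref{Lm: barrar}(c) and non-vanishing for small $e_P$. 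Because the three groups also carry distinct powers of $\eps$ and $\mu$, their $c_j$ contributions cannot cancel identically in $(e_P,\eps,\mu)$. In either formulation one concludes $c_j\not\equiv 0$, hence $c_j$ vanishes only on a discrete subset of $(0,1/2)$; discarding this set together with the finitely many exceptional eccentricities of Lemma \ref{Lm: barrar}(c) yields $\bar f_1(\nu)-\bar f_2(\nu)\neq\mathrm{const}$ for all but finitely many $e_P\in(0,1/2)$, in particular for our chosen value, hence the non-degeneracy condition \eqref{eq: f1f2}.

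The hardest part is the reduction and the attendant bookkeeping: the four pieces of $f$ contribute at orders that look comparable only after the powers of $\mu$ hidden in $r_{L_i}$, $L_1-L_2$ and $r_2=O(\mu^{1/3})$ are made explicit, and one must be sure their $\nu_1$-oscillations do not sum to zero. The analyticity-in-$e_P$ step is what makes the argument robust — it replaces an exact accounting of cancellations by the non-vanishing of a single explicit one-variable integral — but evaluating that integral and controlling the error from using unperturbed Keplerian ellipses (as already done in the proof of Lemma \ref{Lm: barrar}(c)) is where the genuine work lies.
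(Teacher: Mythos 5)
Your overall strategy---shrink the Lyapunov orbits to the Lagrange points, expand $f$ term by term, and use Lemma~\ref{Lm: 1/3} to feed in the $\mu^{1/3}$-asymptotics of $L_1$, $L_2$---matches the paper's. But the way you close the argument is different and has a genuine gap, and you also misattribute which piece of $f$ is responsible for the non-constancy.

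The paper's proof \emph{explicitly} identifies the dominant contribution. It is \emph{not} the dipole of $-\delta/r_{AP}$ (your group (ii)): that term is only $O(\delta\varepsilon^{4/3})$ and is discarded. It is also \emph{not} the $\dot\theta$-term (your group (i)): that carries a prefactor $\dot\theta=O(\delta\varepsilon^{-1/3})$. The term that wins is your group (iii), the displacement of the Sun and Jupiter from $(-\mu,0)$ and $(1-\mu,0)$ in the rotating frame. Via the mass-center relation $\mu\tilde q_J+(1-\mu)\tilde q_S+\delta q_P=0$ (equation \eqref{EqTqJS}), these displacements are slaved to $\delta q_P$, and since $|q_P|=O(\varepsilon^{-2/3})$ this gives the dominant size $O(\delta\varepsilon^{-2/3}\mu^{1/3})$. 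After the Taylor expansion in the second and third bullet points and evaluation at $q_{Ai}=0$, the paper isolates a term $\delta\langle u,q_P\rangle$ with $u$ a \emph{fixed nonzero vector}, the nonvanishing coming directly from the $\pm 3^{-1/3}\mu^{1/3}$ of Lemma~\ref{Lm: 1/3}: the $\langle q_{Ai}+b_i,\cdot\rangle/|q_{Ai}+b_i|^3$ piece contributes $-2(c_1-c_2)+O(\mu^{2/3})$ and the $\mu\langle q_{Ai}+c_i,\cdot\rangle/|q_{Ai}+c_i|^3$ piece contributes $(3(c_1-c_2)+o(\mu^{1/3}))/\mu\cdot\mu$, and they do not cancel. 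Non-constancy is then immediate because $q_P(\nu)$ traces a (perturbed Kepler) ellipse in the rotating frame --- no Fourier decomposition, no Riemann--Lebesgue, no analyticity in $e_P$, and the conclusion holds for \emph{all} $e_P\in(0,1/2)$, not merely for all but finitely many.

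Your route replaces this direct computation by a soft argument: analyticity of $\bar f_1-\bar f_2$ in $e_P$ plus exhibiting a single nonvanishing Fourier coefficient in $\nu_1$. Two problems. First, you never actually produce that nonvanishing coefficient; you gesture at Riemann--Lebesgue and at ``distinct powers of $\varepsilon$ and $\mu$,'' but the one place where the paper does real work --- pinning down the constant vector $u$ and showing it is not zero --- is exactly what your plan would also have to do, so the analyticity wrapper buys nothing and costs you the uniformity in $e_P$. Second, your claim that ``by rotational invariance of the gravitational interaction the result does not depend on the slow angle $\nu_2$'' is unjustified. The angle $\nu_2=\theta/\varepsilon$ is not the overall rotational phase of the configuration but the libration of the binary's angle away from uniform rotation, and $f$ feels it through $\dot\theta$, $\tilde q_S$, $\tilde q_J$, and through $q_P$ written in rotating coordinates; there is no symmetry that kills this dependence, and the paper's proof never invokes such a reduction. (Your reading of the $\dot\theta$-term as contributing $-\dot\theta\,(r_{L_1}^2-r_{L_2}^2)$ is a reasonable estimate --- the paper instead reduces it to the small angular momenta $\omega_i$ about $L_i$ by an averaging claim about $p_A$ --- but either way this piece is subdominant to $\delta\langle u,q_P\rangle$ once $\delta=O(\varepsilon^3)$, so the discrepancy does not affect the conclusion.)
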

\begin{proof}
We analyze the expression of $f(\bullet_A,\nu)$ \eqref{eq: pert} term by term.

We denote by $q_{Ai}$ the relative position vector from the asteroid to $L_i,\ i=1,2$. We denote by $q_A=a_i+q_{Ai}$,  where $i=1,2$ means the $L_1,L_2$ Lagrangian points and  $a_i$ is the constant vector from $L_i$ to the origin.

$\bullet$ First we consider first the term using part (c) of Lemma \ref{LmEst}\[-\dot{\theta} q_A ^{T}\times p_A=\left(-2\sqrt{\dt} \tilde r(\nu) +\dfrac{1}{\al}(c\dt +q_2\times p_2)\right)(a_i \times p_A+ q_{Ai} \times p_A)+O(\dt^{3/2}).\] We integrate along the periodic orbits $\gm_i$ holding $\nu$ constant. The integral of $p_A$ is 0 since $p_A$ is a periodic function of zero average, otherwise $\gm_i$ will move away with the average velocity. Next, the term $q_{Ai} \times p_A$ means the angular momentum of the asteroid around the point $L_i$, which we denote by $\omega_i$. So the contribution from this term is $\omega_i \left(-2\sqrt{\dt} \tilde r(\nu) +\dfrac{1}{\al}(c\dt +q_2\times p_2)\right)+O(\dt^{3/2}).$

$\bullet$ Next we consider the term $(1-\mu)\left(\dfrac{1}{r_1}-\dfrac{1}{r_{AS}}\right)$. We denote by $b_i$ the vector from $L_i$ to the $-(\mu,0)$. So $r_1=|q_A+(\mu,0)|=|b_i+q_{Ai}|$ and $r_{AS}=|q_A-q_S|=|q_{Ai}+b_i+\tilde q_{S}|,$ where $\tilde q_{S}=q_S+(\mu,0)$ is the vector from the sun to  $-(\mu,0)$.
We perform Taylor expansion around the points $\tilde q_{S}=0$ to get
\[(1-\mu)\left(\dfrac{1}{r_1}-\dfrac{1}{r_{AS}}\right)=(1-\mu)\dfrac{\langle q_{Ai}+b_i,\tilde q_S\rangle}{|q_{Ai}+b_i|^3}+O(|\tilde q_S|^2),\quad \dt,\eps\to 0.\]

$\bullet$ Next we consider the term $\mu\left(\dfrac{1}{r_1}-\dfrac{1}{r_{AJ}}\right)$. We denote by
$c_i$ the vector from $L_i$ to the $(1-\mu,0)$, $|c_i|=O(\mu^{1/3})$ according to Lemma \ref{Lm: 1/3}. So we have \\ $r_2=|q_A-(1-\mu,0)|=|c_i+q_{Ai}|$ and $r_{AJ}=|q_A-q_J|=|q_{Ai}+c_i+\tilde q_{J}|,$ where $\tilde q_{J}=q_J-(1-\mu,0)$ is the distance from the Jupiter to  $(1-\mu,0)$.
We perform Taylor expansion around the points $\tilde q_{J}=0$ to get
\[\mu\left(\dfrac{1}{r_2}-\dfrac{1}{r_{AJ}}\right)=\mu\dfrac{\langle q_{Ai}+c_i,\tilde q_J\rangle}{|q_{Ai}+c_i|^3}+O(|\tilde q_J|^2),\quad \dt,\eps\to 0.\]

$\bullet$ Finally, we have the term $\dfrac{\dt}{r_{AP}}=\dfrac{\dt}{r_{P}}-\dfrac{\dt\langle q_{A},q_P\rangle}{r_P^3}=\dfrac{\dt}{r_{P}}+O(\dt\eps^{4/3})$ since $r_{AP}\geq C\eps^{-2/3}$.

We have chosen the mass center as the origin, i.e. $\mu \tilde q_J+(1-\mu)\tilde q_S+\dt q_P=0$. We define $\tilde q=\tilde q_J-\tilde q_S$ so that we have \begin{equation}\label{EqTqJS}\tilde q_J=(1-\mu)\tilde q-\dt q_P,\quad \tilde q_S=-\mu \tilde q-\dt q_P.\end{equation}
In our nonuniform rotating coordinates, $q_1=q_J-q_S$ in \eqref{EqJacobi} is parallel to the x-axis.
Moreover, we have converted $(q_1,p_1)$ in \eqref{EqJacobi} to polar coordinates so that we have the following using Lemma \ref{LmEst} part (b)
\[q_1=q_J-q_S=r(1,0)=\tilde q+(1,0),\quad \tilde q=\sqrt{\dt}\tilde r(1,0)=o(\dt).\]
To summarize the above estimates we average along the periodic orbits $\gm_1,\ \gm_2$ and then take the difference to get that
\begin{equation}\label{Eqfinal}\begin{aligned}
&\left(-2\sqrt{\dt} \tilde r(\nu) +\dfrac{1}{\al}(c\dt +q_2\times p_2)\right)(\omega_1-\omega_2)\\
&-(1-\mu) \left[\dfrac{1}{T_1}\int_0^{T_1}\dfrac{ q_{A1}+b_1}{|q_{A1}+b_1|^3}-\dfrac{1}{T_2}\int_0^{T_2}\dfrac{q_{A2}+b_2}{|q_{A2}+b_2|^3} dt\right]\cdot(-\mu \tilde q-\dt q_P)\\
&-\mu\left[\dfrac{1}{T_1}\int_0^{T_1}\dfrac{q_{A1}+c_1}{|q_{A1}+c_1|^3}dt-\dfrac{1}{T_2}\int_0^{T_2} \dfrac{ q_{A2}+c_2}{|q_{A2}+c_2|^3}\, dt\right]\cdot((1-\mu)\tilde q-\dt q_P)+O(|\tilde q_S|^2+|\tilde q_J|^2)
\end{aligned}
\end{equation}
The integrals in the two brackets give us constant vectors which are not zero. We have estimate $\sqrt{\dt}\tilde r,\ \tilde q=o(\dt)$ according to part (b) of Lemma \ref{LmEst}.

Next, we estimate the angular momentum $q_2\times p_2=O(\dt\eps^{-1/3}),$\ $\dt,\eps\to 0$ as we did in the proof of part (b) of Lemma \ref{LmEst} using $e<1/2$. However $\dt|q_P|\geq c\dt\eps^{-2/3}\gg |q_2\times p_2|$ as $\eps\to 0$, so $q_2\times p_2$ term is much smaller than $\dt q_P$.

Finally, we have that $O(|\tilde q_S|^2+|\tilde q_J|^2)=O(\dt^2|q_P|^2)$ using \eqref{EqTqJS} and $\tilde q=o(\dt)$, which is  much smaller than $\dt q_P$ due to $\dt=O(\eps^3)$. The $O(\dt^{3/2})$ from the first bullet point and $O(\dt\eps^{4/3})$ from the last bullet point are even smaller.

We have shown that the term $\dt \langle u,q_P\rangle$ is the leading term where $u$ is a constant vector denoting the sum of the integrals in the two brackets. Now we show $u\neq 0$ for $\mu>0$ sufficiently small, and the diameters of the two Lyapunov periodic orbits $\gm_1,\gm_2$ are also sufficiently small. From the definitions, we have $b_i=c_i+(1,0)$ and $c_i=O(\mu^{1/3})$ using Lemma \ref{Lm: 1/3}.
When $q_{Ai}=0$, the averaging procedure in the brackets is not needed. In the first bracket we perform Taylor expansion around $c_i=0$ to get $-2(c_1-c_2)+O(\mu^{2/3})$ and in the second bracket, we use Lemma \ref{Lm: 1/3} to substitute $c_i=\pm 3^{-1/3}\mu^{1/3}+o(\mu^{1/3})$ to get $(3(c_1-c_2)+o(\mu^{1/3}))/\mu$. So the second and third lines of \eqref{Eqfinal} add up to $[-(c_1-c_2)+o(\mu^{1/3})]\cdot \dt q_P+o(\dt).$ This shows $u\neq 0$ when $q_{Ai}=0$. When $|q_{Ai}|\neq 0$ but small enough, we also have $u\neq 0.$

The expression $\dt \langle u,q_P\rangle$ is nonconstant as a function of $\nu$, since $q_P$ is a perturbed Kepler ellipse in rotating coordinates. This completes the proof of the lemma.
\end{proof}
\begin{Rk}\label{RkFinal}
In the case of $\mu$ not small or the diameters of the Lyapunov orbits are not small, e.g. $\mu\simeq 10^{-3},\ J=3.03$ being the realistic value, the same proof applies except that we need to show $u\neq 0$ in some other ways. We can use numeric results about the Lyapunov orbits $\gamma_1,\gamma_2$ to do the integrals in \eqref{Eqfinal} and check $u\neq 0$, but we do not have a proof here.
\end{Rk}
\begin{proof}[proof of main Theorem \ref{Thm: main}]
To complete the proof, it remains to estimate the diffusion time. According to the proof of Lemma \ref{Lm: nondeg}, we use
$|\dfrac{d}{d\nu}\dt\langle u,q_P\rangle|=O(\dt\eps^{-2/3})$ to estimate $|\dfrac{d}{d\nu}\bar{f}_1(\nu)-\bar{f}_2(\nu)|$ for some nonzero constant vector $u$ denoting the integrals in \eqref{Eqfinal}. To get $O(1)$ energy growth in \eqref{eq: 3} as $\dt,\eps\to 0$, the time it takes is at most of order $O(1/(\dt\eps^{-2/3}\times \eps))=O(\dt^{-1}\eps^{-1/3})$.
\end{proof}
\appendix
\section{Two-body problem in Delaunay and polar coordinates}
\subsection{Delaunay coordinates}\label{AppendixDelaunay}
The purpose of this appendix is to give a brief introduction to the Delaunay coordinates used in the paper.
The materials could be found in \cite{8}. For two-body problem of the form\[H(P,Q)=\dfrac{|P|^2}{2m}-\dfrac{k}{|Q|},\quad (P,Q)\in \R^4,\]
we know it is integrable in the Liouville-Arnold sense when $H<0$. So we have the action-angle variables $(L,\ell, G,g)$ to write the Hamiltonian can be written as
\[H(L,\ell, G,g)=-\dfrac{mk^2}{2L^2},\quad (L,\ell, G,g)\in T^*\T^2.\]
The Hamiltonian equations are
\[\dot{L}=\dot{G}=\dot{g}=0,\quad \dot{\ell}=\dfrac{mk^2}{L^3}.\]
If we define the quantities:\\
$E$: energy, $M$: angular momentum, $e$: eccentricity, $a$: semimajor, $b$: semiminor, \\
then we have the following relations which endow the Delaunay coordinates the physical and geometrical meanings.
\[a=\frac{L^2}{mk}, \ b=\frac{LG}{mk},\ E=-\frac{k}{2a},\ M=G,\ e=\sqrt{1-\left(\frac{G}{L}\right)^2}.\]
Moreover, $g$ is the argument of periapsis and $\ell$ is called the mean anomaly.
We also have the Kepler's law $\dfrac{a^3}{T^2}=\dfrac{1}{(2\pi)^2}$ which relates the semimajor $a$ and the period $T$ of the ellipse.
For two-body problem, consider a body with position $(q_1, q_2)$ and momentum $(p_1,p_2)$. we have the following formulas
\[\begin{cases}
& q_1=a(\cos u-e),\\
& q_2=a\sqrt{1-e^2}\sin u,
\end{cases}
\quad\begin{cases}
& p_1=-\sqrt{mk}a^{-1/2}\dfrac{\sin u}{1-e \cos u},\\
& p_2=\sqrt{mk}a^{-1/2}\dfrac{\sqrt{1-e^2}\cos u}{1-e \cos u},
\end{cases}\]
where $u$ and $l$ are related by \begin{equation}\label{Equl}u-e\sin u=\ell.\end{equation}

Convert everything except $u$ into Delaunay, we have the following

\begin{equation}\begin{cases}
& q_1=(L^2/mk)\left(\cos u-\sqrt{1-\dfrac{G^2}{L^2}}\right),\\
& q_2=(LG/mk)\sin u.
\end{cases}
\quad
\begin{cases}
& p_1=-\dfrac{mk}{L}\dfrac{\sin u}{1-\sqrt{1-\frac{G^2}{L^2}} \cos u},\\
& p_2=\dfrac{mk}{L^2}\dfrac{G\cos u}{1-\sqrt{1-\frac{G^2}{L^2}}\cos u}.
\end{cases}\label{eq: delaunay}\end{equation}

Finally, we rotate the $(q_1,q_2)$ and $(p_1,p_2)$ using the matrix
$\left[\begin{array}{cc}
\cos g& -\sin g\\
\sin g& \cos g
\end{array} \right].
$
\subsection{Polar coordinates}\label{AppendixPolar}
We introduce the polar coordinates for $(P,Q)=(\dot x,\dot y, x,y)$ using the relation  (c.f. Chapter 7 of \cite{8})
\begin{equation}
\label{EqPolar}
x=r\cos\theta,\quad y=r\sin\theta,\quad m\dot x= R\cos\theta-\dfrac{\Theta}{r}\sin\theta,\quad m\dot y=R\sin\theta+\dfrac{\Theta}{r}\cos\theta.
\end{equation} Our symplectic form becomes
\[dP\wedge dQ=dR\wedge dr+d\Theta\wedge d\theta.\]
The Hamiltonian becomes
\begin{equation}H_2=\left[\frac{1}{2\al}\left(R^2+\dfrac{\Theta^2}{r^2}\right)-\dfrac{\al}{r}\right],\label{eq: H3}\end{equation}
\[\dot r=\dfrac{1}{\al}R,\quad \dot\theta=\dfrac{1}{\al}\dfrac{\Theta}{r^2},\quad \dot R=\dfrac{1}{\al}\dfrac{\Theta^2}{r^3}-\dfrac{\al}{r},\quad \dot\Theta=0.\]

We get $\Theta$ is a constant of motion and \[\ddot r=\dfrac{1}{\al}\dot R=\dfrac{1}{\al}\left(\dfrac{1}{\al}\dfrac{\Theta^2}{r^3}-\dfrac{\al}{r}\right)\] is only an equation of $r$, which can be solved explicitly. For more details of solving this equation, see Chapter 7.4.1. of \cite{8}.

We are only interested in the case of circular Kepler motion, i.e. $\dot r=0$, in which case $\dot\theta$ is also a constant following from the Hamiltonian equation. The constant $r=\dfrac{\Theta}{\al}$ is determined from the equation $\dot R=0$ and hence $\dot\theta=\dfrac{\Theta}{\al}.$ We choose $\Theta=\al$ to get the normalization $\dot \theta=1$, which in turn implies that $r=1$. If we are in a rotating coordinates such that $\dot\theta=0$, we get that $\dot R=\dot r=\dot\Theta=\dot\theta=0$ so that the circular motion is a critical point of the two-body problem in rotating coordinates.
\section*{Acknowledgement}
I would like to thank Prof. V. Kaloshin for the careful instruction, Dr. Yong Zheng and Joseph Galante for the  helpful discussions and many suggestions for revising the paper. I would also like to thank the anonymous referees for their valuable suggestions.

\end{document}